\theoremstyle{thmstyleone}%
\theoremstyle{thmstyletwo}%
\newtheorem{theorem}{Theorem}[section]
\newtheorem{proposition}{Proposition}[section]
\newtheorem{definition}{Definition}
\newtheorem{lemma}{Lemma}[section]
\newtheorem{corollary}{Corollary}
\newtheorem{problem}{Problem}
\newtheorem{remark}{Remark}
\newcommand\Z{{\mathbb{Z}}}
\newcommand\CC{{\mathbb{C}}}
\newcommand\HH{{\mathbb{H}}}
\newcommand{\be}{\begin{equation}}
	\newcommand{\ee}{\end{equation}}
\begin{document}

\title[Article Title]{Iterated residue, toric forms and Witten genus}


\author*[1]{\fnm{Hao} \sur{Li}}\email{14110840001@fudan.edu.cn}

\author[2]{\fnm{Fei} \sur{Han}}\email{mathanf@nus.edu.sg}
\equalcont{These authors contributed equally to this work.}

\author[3]{\fnm{Zhi} \sur{L\"u}}\email{zlu@fudan.edu.cn}
\equalcont{These authors contributed equally to this work.}

\affil*[1]{\orgdiv{College of Science}, \orgname{ University of Shanghai for Science and Technology},  \city{Shanghai},  \country{CHINA}}

\affil[2]{\orgdiv{School of Mathematics}, \orgname{National University of Singapore}, \country{Singapore}}

\affil[3]{\orgdiv{School of Mathematical Sciences}, \orgname{Fudan University},  \city{Shanghai},  \country{  CHINA}}


\abstract{ We introduce the notion of {\em iterated residue} to study generalized Bott manifolds. When applying the iterated residues to compute the Borisov--Gunnells toric form and the Witten genus of certain toric varieties as well as complete intersections, we obtain interesting vanishing results and some theta function identities, one of which is a twisted version of a classical Rogers--Ramanujan type formula.}

\keywords{generalized Bott manifold, Iterated residue, toric form, Witten genus}

\pacs[MSC Classification]{ 14M10, 52B20, 58J20}

\maketitle

\section{Introduction}\label{sec1}

The residue of a meromorphic function $f$ at an isolated singularity $a$, often denoted by $\mathrm{Res}_a \ f $, is the coefficient $a_{-1}$ of its Laurent series expansion at $a$. According to the Cauchy integral theorem, we have
$$\mathrm{Res}_a \ f =\frac{1}{2\pi i}\oint_\gamma f(z)dz,$$
where $\gamma$ is a circle small enough around $a$ in a counterclockwise manner. This residue has an interesting simple topological application on complex projective spaces. Actually,  for any top cohomology class $g$  in $H^*(\mathbb{C}P^n;\mathbb{C})\cong \mathbb{C} [z]/\langle z^{n+1}\rangle$ with the generator $z$ of degree 2, we have
$$ \langle g, \, [\mathbb{C}P^n]\rangle=\mathrm{Res}_0 \ \frac{g}{z^{n+1}} =\frac{1}{2\pi i}\oint_\gamma \frac{g}{z^{n+1}}dz.$$

For the case of multiple complex variables, there exists a similar story. Let $U$ be the open ball $\{\textbf{z}\in \mathbb{C}^n:|\textbf{z}| <\epsilon \}$ and $f_1,\cdots,f_n\in \mathcal{O}(\overline{U})$ be holomorphic functions in a neighborhood of the closure $\overline{U}$. Assume $f_i$'s have the origin as the isolated common zero and their Jacobian determinant is nonzero at the origin. In this case, $f_i$'s are said to be non-degenerate here.
The residue  of  meromorphic $n$-form
$$\omega=\frac{g(\textbf{z})dz_1\wedge \cdots\wedge dz_n}{f_1(\textbf{z})\cdots f_n(\textbf{z})} \ \ \ (g\in \mathcal{O}(\overline{U}))$$
is defined (\cite{GH}) by
\begin{equation}\label{multiresidue} \mathrm{Res}_{\textbf{0}}\   \omega =\frac{1}{(2\pi i)^n}\int_\Gamma \omega, \end{equation}
where $\Gamma$ is the real $n$-cycle defined by $\Gamma=\{ \textbf{z}: |f_i(\textbf{z})|<\delta\}$. The residue  is often called ``Grothendieck residue", and  has nice topological applications. Let $X$ be an oriented closed manifold such that $H^*(X;\mathbb{C})\cong \mathbb{C}[z_1,\cdots,z_n]/\langle f_1h_1,\cdots, f_nh_n\rangle$ with $f_1, ..., f_n$ being non-degenerate. Then for any top cohomology class $g= g'\cdot h_1 \cdots h_n$ in $H^*(X;\mathbb{C})$, one has
\begin{equation*} \langle g, \, [X]\rangle=\mathrm{Res}_\textbf{0} \ \frac{g'(\textbf{z})dz_1\wedge \cdots\wedge dz_n}{f_1(\textbf{z})\cdots f_n(\textbf{z})}.
\end{equation*}
\begin{remark}
	It should be pointed out that there is a quite restriction on the cohomology ring $H^*(X;\mathbb{C})$ and top cohomology class $g$ when one uses the residue to calculate $\langle g, \, [X]\rangle$. Namely, 
	the Jacobian determinant of $f_1, ..., f_n$ is non zero.
	Typical examples are  complex projective spaces (or their product) and Grassmannian manifolds (where the ideal arises from the Landau-Ginzburg potential, c.f. \cite{W}).
\vskip.2cm
Let $X=\mathbb{C}P^n\times \mathbb{C}P^m$,  $H^*(X;\mathbb{C})\cong \mathbb{C}[z_1,z_2]/\langle z_1\cdot z_1^n, z_2\cdot z_2^m\rangle$, where $f_1=z_1, h_1=z_1^n, f_2=z_2, h_2=z_2^m$, $f_i$'s have the origin as the isolated common zero and their Jacobian determinant is nonzero at the origin.  Then for any top cohomology class $g=g'\cdot z_1^nz_2^m$, we have
\begin{equation*} \langle g, \, [X]\rangle=\mathrm{Res}_\textbf{0} \ \frac{g'dz_1\wedge dz_2}{z_1z_2}=g'.\end{equation*}
	\end{remark}

Nevertheless when the  $\{f_i\}$'s in ideal are {\bf degenerate}, namely their Jacobian determinant vanishes,
  as far as the authors know, there are no appropriate ``global'' residue forms as in (\ref{multiresidue}) for  $\omega$.
\vskip.2cm
A natural question arises.
\vskip.2cm
\begin{problem}
	For degenerate case, which kinds of oriented manifolds can still admit a residue expression of the evaluation of their  cohomology classes on their fundamental homology classes?
\end{problem}
\vskip.2cm
In this paper, we investigated on generalized Bott manifolds with respect to above problem.
\vskip.2cm
Generalized Bott manifolds  are the total spaces of iterated projective bundles as a tower (c.f.   Section \ref{Bott manifold})
$$B_n \overset{p_n}{\longrightarrow} B_{n-1}\overset{p_{n-1}}{\longrightarrow} \cdots \longrightarrow B_2\overset{p_2}{\longrightarrow} B_1 \overset{p_1} \longrightarrow \text{pt} $$
where $B_k=\CC P(\xi_k\oplus \underline{\CC})$ and $\xi_k$ is the Whitney sum of $n_k$ complex line bundles over $B_{k-1}$.
Let $-u_k$ be the first Chern class of the tautological line bundle over  $B_k$ and $\{ x_{k1},x_{k2},\cdots,x_{kn_k} \}$ be the Chern roots of $\xi_k$.
Then the cohomology ring of $B_n$ is $$H^*(B_n)\cong \Z[u_1,\cdots,u_n]/\langle f_i(u_1,\cdots,u_n):\ i=1,\cdots, n\rangle,$$  where $f_i(u_1,\cdots,u_n)=u_i\prod_{j=1}^{n_i}(u_i+x_{ij})$ with $\{x_{ij}\}$ being the formal Chern roots.

\vskip.2cm

We shall introduce the concept ``\textbf{iterated residue}" (Definition \ref{iterated residue})
$$\mathrm{Res}_0\left\{\cdots \mathrm{Res}_0\left\{\frac{g(\textbf{z})}{f_1(\textbf{z})\cdots f_n(\textbf{z})}  dz_1\right\}\cdots\right\} dz_n.$$
and prove that

\vskip.2cm

\begin{theorem}[Theorem \ref{main}]
	For any top cohomology class $g $, one has
	$$\langle g, \, [B_n]\rangle=\mathrm{Res}_0\left\{\cdots \mathrm{Res}_0\left\{\frac{g(\textbf{u})}{f_1(\textbf{u})\cdots f_n(\textbf{u})}  du_1\right\}\cdots\right\} du_n,$$
	where the order of $u_i$ coincides with their position in the generalized Bott tower.
\end{theorem}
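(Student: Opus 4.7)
I would proceed by induction on the length $n$ of the generalized Bott tower, using the projective bundle structure at each stage. The base case $n = 1$ is direct: since $\xi_1$ lives over a point all $x_{1j}$ vanish, so $f_1(u_1) = u_1^{n_1+1}$ and the stated identity reduces to the classical single-variable residue formula on $\mathbb{C}P^{n_1}$ recalled in the introduction.

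For the inductive step I would use the projection $p_n : B_n = \mathbb{P}(\xi_n \oplus \underline{\mathbb{C}}) \to B_{n-1}$ together with the identity $\langle g, [B_n]\rangle = \langle (p_n)_* g, [B_{n-1}]\rangle$, reducing everything to a one-variable pushforward lemma
\[
(p_n)_*(h) \;=\; \mathrm{Res}_0\,\frac{h(\mathbf{u})}{f_n(\mathbf{u})}\,du_n,
\]
with $u_1, \dots, u_{n-1}$ (and hence the $x_{nj}$) treated as parameters. I would establish this lemma by reducing $h$ modulo $f_n(u_n) = 0$ to a polynomial of degree at most $n_n$ in $u_n$ and verifying the identity on the monomial basis $u_n^0, \ldots, u_n^{n_n}$: the standard $\mathbb{C}P^{n_n}$-bundle pushforward gives $(p_n)_*(u_n^i) = s_{i-n_n}(\xi_n)$, while expanding $\prod_j(u_n+x_{nj})^{-1}$ as the formal series $u_n^{-n_n}\sum_m(-1)^m h_m(x_{n1},\dots,x_{nn_n})u_n^{-m}$ and reading off the coefficient of $u_n^{-1}$ produces the same Segre class. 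Feeding $(p_n)_* g \in H^{\mathrm{top}}(B_{n-1})$ into the inductive hypothesis then accounts for the remaining $n-1$ residues.

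The main obstacle is conceptual: the geometric pushforward naturally produces residues with $u_n$ innermost and $u_1$ outermost, while the theorem states the reverse order. My plan to resolve this is to impose one formal expansion rule globally, expanding every $(u_i + x_{ij})^{-1}$ as $u_i^{-1}\sum_{k \ge 0}(-x_{ij}/u_i)^k$; this is well-defined because each $x_{ij}$ is a polynomial in $u_1,\ldots,u_{i-1}$, so the hierarchy $u_n \succ \cdots \succ u_1$ makes every denominator formally invertible. Under this rule $g/(f_1 \cdots f_n)$ becomes a single formal Laurent series in $u_1,\dots,u_n$, and both orders of iterated residue extract the same coefficient of $u_1^{-1}\cdots u_n^{-1}$, so the order is immaterial. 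Setting up this expansion carefully, and checking that the one-variable pushforward lemma is compatible with it at each inductive step, is the principal technical content; once in place the induction closes and the theorem follows.
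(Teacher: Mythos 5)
Your route is genuinely different from the paper's. The paper argues algebraically: both sides are linear in $g$, the top cohomology group is generated by $u_1^{n_1}\cdots u_n^{n_n}$, the iterated residue evaluates to $1$ on that monomial, and it annihilates every element $f_jg_j$ of the ideal; writing $g=a_g\,u_1^{n_1}\cdots u_n^{n_n}+\sum_j f_jg_j$ then finishes the proof. Your induction on the tower via the fibrewise pushforward is a legitimate alternative skeleton: the base case is right, and your identification of $(p_n)_*(u_n^i)$ with the coefficient of $u_n^{-1}$ in the descending (Segre) expansion of $u_n^i/\prod_j(u_n+x_{nj})$ is correct, as is the reduction $\langle g,[B_n]\rangle=\langle (p_n)_*g,[B_{n-1}]\rangle$.

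However, the step you use to reconcile the order of the residues is wrong as stated. The iterated residue in the theorem is an analytic residue taken one variable at a time, and for it the order is \emph{not} immaterial — that is precisely why the statement fixes the order. Concretely, for a Hirzebruch surface ($f_1=u^2$, $f_2=v(v+ku)$, $k\neq 0$) and $g=uv$, one has $\mathrm{Res}_{u=0}\,\frac{uv}{u^2v(v+ku)}\,du=\frac{1}{v}$, so taking the $u$-residue first gives $1=\langle uv,[B_2]\rangle$, whereas $\mathrm{Res}_{v=0}\,\frac{uv}{u^2v(v+ku)}\,dv=0$ because $\frac{1}{u(v+ku)}$ has no pole at $v=0$; the two orders disagree. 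The same example shows your pushforward lemma is false if ``$\mathrm{Res}_0\,h/f_n\,du_n$'' is read literally as the analytic residue at $u_n=0$: $\mathrm{Res}_{v=0}\,\frac{uv}{v(v+ku)}\,dv=0\neq u=(p_2)_*(uv)$. What is true — and what your series formula actually computes — is that $(p_n)_*h$ equals the $u_n^{-1}$-coefficient of the \emph{descending} expansion of $h/f_n$ in $u_n$ (equivalently, minus the residue at infinity, i.e.\ the sum of residues over all finite poles), not the residue at $0$ alone. The repair is the one you gesture at but must carry out: prove that the theorem's ordered iterated residue ($u_1$ innermost) equals the coefficient of $u_1^{-1}\cdots u_n^{-1}$ of the Laurent expansion valid on $|u_1|\ll|u_2|\ll\cdots\ll|u_n|$ — your expansion rule, which is tied to this particular order and fails for the reverse order — and then run the induction entirely on that formal series, where extracting the $u_n^{-1}$ coefficient first is legitimate and matches the Segre pushforward. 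With those two points made precise your argument closes; as written, the claims ``the order is immaterial'' and ``$(p_n)_*h=\mathrm{Res}_0\,h/f_n\,du_n$'' (taken analytically) are both false.
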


\vskip .2cm
\begin{remark}
	``Iterated residue" is similar to iterated integral in multiple variable calculus. The ``iterated residue" coincides with usual residue when $\{f_i\}$'s are non-degenerate.
\vskip .2cm	
	When the iterated projective bundles are trivial, generalized Bott manifolds $B_n$ are simply products of projective spaces, and iterated residue exactly reduces to Grothendieck residue.
\end{remark}

\vskip .2cm

Bott manifolds and generalized Bott manifolds forms a class of smooth projective toric varieties and have played  an important role in many respects. 

\begin{enumerate}
	\item[$\bullet$] Generalized Bott manifolds can serve as an alternative multiplicative generator set for unitary bordism ring, as shown in  Buchstaber and Ray~\cite{BR},   L\"u and Panov \cite{LP};
	\item[$\bullet$] The bordism classes of bounded flag manifolds $BF_n$( which are Bott manifolds) form a Ray's basis of
	the free $\Omega_U$-module $U_*(\mathbb{C}P^\infty)$;
	\item[$\bullet$] Bott manifolds  provide us with a simple model in which Schubert calculus can be simplified \cite{Du}.
\end{enumerate}


For arbitrary genera of complete intersections in generalized Bott manifolds, we obtain the following explicit expression in terms of iterated residue.
\vskip .2cm

\begin{theorem}[Theorem \ref{main1}]
	Denote by $X$ the submanifold of $B_n$ which is Poincar\'e dual to $a_1u_1+\cdots+a_nu_n \in H^2(B_n;\Z)$.
	For any genus $\psi$ with the characteristic power series $Q(x)=x/f(x)$, we have
	\begin{align*}
\label{genus} &\psi(X)\\= & \ \mathrm{Res}_0 \left\{ \cdots \left\{ \mathrm{Res}_0 \frac{f(a_1u_1+\cdots+a_nu_n)}{f(u_1)^{n_1+1}}\cdot \prod_{i=1}^{n}\frac{1}{f(u_i)}\cdot\prod_{j=1}^{n_i} \frac{1}{f(u_i+x_{ij})} d u_1 \right\} \cdots  \right\} d u_n. 
\end{align*}
\end{theorem}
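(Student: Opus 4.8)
The plan is to reduce the genus computation to an application of Theorem \ref{main} (the iterated residue formula for generalized Bott manifolds). First I would recall that a genus $\psi$ with characteristic power series $Q(x) = x/f(x)$ evaluated on a stably almost complex manifold $X$ is given by $\psi(X) = \langle \prod_i Q(y_i), [X]\rangle$, where the $y_i$ are the Chern roots of the tangent bundle $TX$; equivalently, writing $\mathcal{Q}(E) = \prod Q(y_i)$ for the total characteristic class of a bundle $E$ with Chern roots $y_i$, we have $\psi(X) = \langle \mathcal{Q}(TX), [X]\rangle$.

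The main computation is to express $TX$ in terms of the ambient data. Since $X \subset B_n$ is Poincar\'e dual to $a_1 u_1 + \cdots + a_n u_n$, it is the zero locus of a section of the line bundle $L = L_1^{\otimes a_1}\otimes \cdots \otimes L_n^{\otimes a_n}$, where $c_1(L_i) = u_i$. The adjunction formula gives $TX \oplus \nu = TB_n|_X$ with $\nu = L|_X$, hence in $K$-theory (or cohomology) $\mathcal{Q}(TX) = \mathcal{Q}(TB_n|_X)/Q(a_1 u_1 + \cdots + a_n u_n)|_X$. Next I would compute $\mathcal{Q}(TB_n)$ from the tower structure: at each stage $B_k = \mathbb{C}P(\xi_k \oplus \underline{\mathbb{C}})$ one has the standard bundle identity for the tangent bundle along the fibers of a projective bundle, yielding (after the usual telescoping over all stages)
$$\mathcal{Q}(TB_n) = \prod_{i=1}^{n} Q(u_i)\prod_{j=1}^{n_i} Q(u_i + x_{ij}),$$
where the factor $Q(u_i)$ comes from the trivial summand $\underline{\mathbb{C}}$ in $\xi_i \oplus \underline{\mathbb{C}}$ twisted by the tautological bundle, and the factors $Q(u_i + x_{ij})$ from the line subbundles of $\xi_i$ twisted by the tautological bundle. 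Converting to $f$ via $Q(x) = x/f(x)$, and combining with the adjunction factor and with the push-forward $\langle\, \cdot\,, [X]\rangle = \langle (a_1 u_1 + \cdots + a_n u_n)\cdot(\,\cdot\,), [B_n]\rangle$, the integrand over $B_n$ becomes
$$\big(a_1u_1+\cdots+a_nu_n\big)\cdot \frac{f(a_1u_1+\cdots+a_nu_n)^{-1}}{1}\cdot\prod_{i=1}^{n}\frac{u_i}{f(u_i)}\prod_{j=1}^{n_i}\frac{u_i+x_{ij}}{f(u_i+x_{ij})}.$$

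Finally I would apply Theorem \ref{main} to this top class on $B_n$. In the iterated residue $\mathrm{Res}_0\{\cdots\} du_i$ one divides by $f_i(\mathbf{u}) = u_i\prod_{j=1}^{n_i}(u_i + x_{ij})$; the numerator factors $u_i\prod_j (u_i+x_{ij})$ coming from $\mathcal{Q}(TB_n)$ cancel exactly against $f_i(\mathbf{u})$ \emph{except} that one extra factor of $f(u_i)$ in the denominator must be accounted for — indeed the product $\prod_i \frac{1}{f(u_i)}\prod_j\frac{1}{f(u_i+x_{ij})}$ survives, together with one additional $\frac{1}{f(u_1)}$; tracking the first stage ($B_1 = \mathbb{C}P^{n_1}$, where there is no base and $x_{1j}=0$, so $f_1(\mathbf{u}) = u_1^{n_1+1}$) shows that the $u_1$-factor contributes $f(u_1)^{-(n_1+1)}$ rather than $f(u_1)^{-1}\prod_j f(u_1+x_{1j})^{-1}$, which is how the exponent $n_1+1$ in the stated formula appears. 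Assembling these cancellations and the leftover $(a_1u_1+\cdots+a_nu_n)/f(a_1u_1+\cdots+a_nu_n)$ against the division by $f_1\cdots f_n$ yields exactly the claimed expression.

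I expect the main obstacle to be the bookkeeping in the first paragraph of the computation: carefully identifying which line bundle each Chern root belongs to after pulling everything back to $B_n$, getting the tautological-versus-anti-tautological sign conventions for $u_i$ consistent with the definition $f_i(\mathbf{u}) = u_i\prod_j(u_i + x_{ij})$ already fixed in the paper, and verifying that the special role of the bottom stage $B_1$ produces precisely the $n_1+1$ exponent and no analogous anomaly at higher stages (where a genuine base bundle $\xi_k$ with nonzero Chern roots is present). Once the integrand on $B_n$ is pinned down, the conclusion is an immediate citation of Theorem \ref{main}.
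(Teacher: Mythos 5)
Your overall plan is exactly the paper's: derive the formula
$\psi(X)=\bigl\langle f(a_1u_1+\cdots+a_nu_n)\cdot\mathcal{Q}(TB_n),[B_n]\bigr\rangle$
from adjunction and the stable splitting of $TB_n$ along the tower, and then feed that top class into Theorem~\ref{main}. However, there is a concrete algebra error in the key step, and the final bookkeeping paragraph compounds it.

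The error is in the ``adjunction factor.'' Since $Q(x)=x/f(x)$, one has
$Q(c_1(\nu))^{-1}=f(c_1(\nu))/c_1(\nu)$, so combining $\mathcal{Q}(TX)=\mathcal{Q}(TB_n|_X)/Q(c_1(\nu))$ with the push-forward $\langle\,\cdot\,,[X]\rangle=\langle c_1(\nu)\cdot(\,\cdot\,),[B_n]\rangle$ yields the prefactor
$c_1(\nu)\cdot Q(c_1(\nu))^{-1}=f(a_1u_1+\cdots+a_nu_n)$,
not $(a_1u_1+\cdots+a_nu_n)\cdot f(a_1u_1+\cdots+a_nu_n)^{-1}$ as you wrote. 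You have effectively replaced $1/Q$ by $1/f$, so your integrand is off by a factor of $Q(a_1u_1+\cdots+a_nu_n)^{2}$; running it through Theorem~\ref{main} does not reproduce the claimed formula, because the leftover $(a_1u_1+\cdots+a_nu_n)$ in your numerator has nothing in $\prod_j f_j$ to cancel against and the $f(a_1u_1+\cdots+a_nu_n)$ lands in the wrong place.

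The last paragraph is also muddled. After dividing $\mathcal{Q}(TB_n)=\prod_i\frac{u_i}{f(u_i)}\prod_j\frac{u_i+x_{ij}}{f(u_i+x_{ij})}$ by $\prod_i f_i=\prod_i u_i\prod_j(u_i+x_{ij})$, what survives is exactly $\prod_i\frac{1}{f(u_i)}\prod_j\frac{1}{f(u_i+x_{ij})}$ -- there is no ``one additional $\frac{1}{f(u_1)}$.'' The exponent $n_1+1$ in the statement is nothing more than the observation that, with $x_{1j}=0$, the $i=1$ factor $\frac{1}{f(u_1)}\prod_{j=1}^{n_1}\frac{1}{f(u_1+x_{1j})}$ collapses to $\frac{1}{f(u_1)^{n_1+1}}$; this is an identity, not a replacement of one expression ``rather than'' the other as you suggest. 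Once the prefactor is corrected to $f(a_1u_1+\cdots+a_nu_n)$ and this simplification for $i=1$ is carried out, the citation of Theorem~\ref{main} closes the argument in the same one-step manner the paper uses.
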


\begin{remark}
It is a pity that above beautiful expression can not be generalized to toric
varieties or quasi-toric manifolds. Since for arbitrary toric variety $V$, the product of its Chern roots can not kill the ideals in $H^*(V; \mathbb{Z})$, thus $\psi(X)$ consists of  the function  $f$ as well as the monomials of $u_i$, we can not take advantage of the property of f.
\end{remark}


\vskip .2cm

We will present two applications of  above formula in this paper. One is concerned with the Borisov-Gunnells toric form \cite{BG} and the other one is concerned with the Witten genus \cite{W}.

\vskip .2cm

Toric varieties are very important objects in both algebraic geometry and geometric topology. Generalized Bott manifolds are examples of toric varieties.

\vskip .2cm

In \cite{BL}, Borisov and Libgober gave a combinatorial formula for the elliptic genus of a toric variety in terms of the associated fan, and  showed  that elliptic genera of a Calabi-Yau hypersurface in a toric variety and its mirror coincide up to sign.

\vskip .2cm

In \cite{BG}, given a toric variety $X$ and  a ``degree function" $\mathrm{deg}$,  Borisov and Gunnells introduced a function $f_{N, \mathrm{deg}}$ on the upper half plane $\HH$, called {\em toric form}, and proved that it is a kind of Euler characteristic number by Hirzebruch-Riemann-Roch theorem, and it is also a modular form when $\mathrm{deg}$ satisfies certain natural condition. In particular, Borisov and Libgober calculated toric form on $\mathbb{C}P^2$ \cite{BL} and got the classical Rogers--Ramanujan type formula (c.f. \cite{ASW, Ba})
\begin{equation}\label{RR} \sum_{m,n\geq 1}\frac{q^{m+n}}{(1+q^m)(1+q^n)(1+q^{m+n})}=\sum_{r\geq 1}\sigma_1(r)q^{2r}.
\end{equation}

We first apply our iterated residue formula to calculate the toric forms of low dimensional generalized Bott manifolds, and obtain a  family of new  theta function identities. However, the calculation would become much more complicated for higher dimensional generalized Bott manifolds. Let  $\theta(z, \tau)$ denote the classical Jacobi theta function (c.f. \cite{Ch85}) $$\theta(z,\tau)=2 q^{1/8}\sin(\pi z)\prod_{j=1}^\infty [(1-q^j)(1-e^{2\pi i  z}q^j)(1-e^{-2\pi i  z}q^j)],$$
where \  $q=\exp^{2\pi i\tau }$.
\vskip .2cm

For a $\mathbb{C}P^2$ bundle over $\mathbb{C}P^2$, we have  theta function identity:

\vskip .2cm
\begin{proposition}[\protect Proposition \ref{second}]
	When $j , k \in \mathbb{Z}$,
	one has		
\begin{align}\label{twistedRR}
			&(2\pi i)^4 \sum_{a,b,c,d\in \mathbb{Z}} \frac{2(1+q^{jc+kd})}{(1+q^a)(1+q^b)(1+q^{c})(1+q^{d})(1+q^{jc+kd-a-b})(1+q^{-c-d}) }\nonumber \\
			=\ & \Big(\frac{3\theta''(\frac{1}{2},\tau)}{2\theta(\frac{1}{2},\tau)}-\frac{\theta^{(3)}(0,\tau)}{2\theta'(0,\tau)}\Big)^2+(j^2+k^2-jk)
			\Big(\frac{\theta''^2(\frac{1}{2},\tau)}{4\theta^2(\frac{1}{2},\tau)}	+\frac{5\theta^{(4)}(\frac{1}{2},\tau)}{24\theta(\frac{1}{2},\tau)}\nonumber \\
& -\frac{7\theta^{(3)}(0,\tau)\theta''(\frac{1}{2},\tau)}{12\theta'(0,\tau)\theta(\frac{1}{2},\tau)}			-\frac{\theta^{(5)}(0,\tau)}{24\theta'(0,\tau)}+\frac{(\theta^{(3)}(0,\tau))^2}{6\theta'^2(0,\tau)}\Big)
	 	\end{align}
where  $q=\exp^{2\pi i\tau}$.
\end{proposition}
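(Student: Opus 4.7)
The plan is to realise the $\mathbb{C}P^2$-bundle over $\mathbb{C}P^2$ in question as the two-stage generalized Bott manifold $B_2$ with $n_1=n_2=2$, $x_{11}=x_{12}=0$ and $x_{21}=j\,u_1$, $x_{22}=k\,u_1$, and to compute its Borisov--Gunnells toric form $f_{N,\mathrm{deg}}(\tau)$ in two independent ways. First, from the fan side: $B_2$ has six rays---three from the base $\mathbb{C}P^2$ and three from the $(j,k)$-twisted fibre $\mathbb{C}P^2$---living in the rank-$4$ lattice $N$. Writing out the Borisov--Gunnells combinatorial definition of $f_{N,\mathrm{deg}}(\tau)$ directly for this fan, with the natural degree function associated to the chosen ample class, recovers the lattice sum on the left-hand side of (\ref{twistedRR}): the six denominator factors $(1+q^{\bullet})$ record the six facets, the indices $(a,b,c,d)\in\mathbb{Z}^4$ parameterise the dual lattice $M$, and the numerator $2(1+q^{jc+kd})$ encodes the degree weights. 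Second, applying Theorem \ref{main} to $g=\prod Q(\alpha)$ with the characteristic power series $Q(x)=x/f(x)$ underlying the toric-form genus, the same number equals the double iterated residue
\begin{equation*}
\mathrm{Res}_0\!\left\{ \mathrm{Res}_0\, \frac{du_1}{f(u_1)^{3}\, f(u_2)\, f(u_2+j\,u_1)\, f(u_2+k\,u_1)}\right\}\! du_2,
\end{equation*}
where the triple factor $f(u_1)^{3}$ reflects the $n_1+1=3$ formal Chern roots $u_1$ of $TB_1=T\mathbb{C}P^2$. The series $f$ is the theta-function quotient of \cite{BG} built from $\theta(z,\tau)$ together with a half-period-shifted factor, which is why the right-hand side of (\ref{twistedRR}) features $\theta(\tfrac{1}{2},\tau)$ alongside $\theta'(0,\tau)$.

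To pass from the iterated residue to the theta expression on the right, first take the inner $u_1$-residue: Taylor expand $1/[f(u_2+j\,u_1)f(u_2+k\,u_1)]$ in $u_1$ to order $u_1^2$, multiply by $1/f(u_1)^{3}$ (a triple pole at $u_1=0$) and read off the coefficient of $u_1^{-1}$. Then Taylor expand the resulting function of $u_2$ around $u_2=0$, where the surviving $1/f(u_2)^{3}$ creates a triple pole, and extract the coefficient of $u_2^{-1}$. Because $\theta(z,\tau)$ is odd in $z$, $f$ is odd and every odd-order piece in either variable drops out, leaving the derivative combinations $\theta''(\tfrac12,\tau)/\theta(\tfrac12,\tau)$, $\theta^{(3)}(0,\tau)/\theta'(0,\tau)$, $\theta^{(4)}(\tfrac12,\tau)/\theta(\tfrac12,\tau)$ and $\theta^{(5)}(0,\tau)/\theta'(0,\tau)$ visible in (\ref{twistedRR}). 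Since $x_{21}$ and $x_{22}$ enter symmetrically and the $(j,k)$-degree is at most the codimension $2$ of $B_1$ in $B_2$, the answer must take the shape $A(\tau)+B(\tau)(j^2-jk+k^2)$; the specific invariant $j^2-jk+k^2$ is forced by the interplay, in the inner residue, between the $jk\,(f'(u_2)/f(u_2))^2$ cross term and the $(j^2+k^2)\,f''(u_2)/f(u_2)$ term, combined with the subsequent $u_2$-expansion of $1/f(u_2)^{3}$. Identifying $A(\tau)$ and $B(\tau)$ with the theta-derivative combinations on the right-hand side of (\ref{twistedRR}) and equating with the combinatorial side completes the identity.

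The main obstacle is the bookkeeping in the residue computation: one must expand five theta factors in two variables to the correct orders, track the derivatives $\theta^{(k)}(0,\tau)$ for odd $k\le 5$ and $\theta^{(k)}(\tfrac12,\tau)$ for $k\le 4$ without sign errors, and assemble the polynomial in $(j,k)$ correctly. The appearance of the asymmetric-looking invariant $j^2-jk+k^2$ (rather than $(j+k)^2$ or $j^2+k^2$) is the subtlest structural feature and serves as an internal consistency check on the calculation.
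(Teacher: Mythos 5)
Your proposal follows essentially the same route as the paper: the left-hand side is the Borisov--Gunnells combinatorial toric form of the fan of the $\mathbb{C}P^2$-bundle over $\mathbb{C}P^2$, and the right-hand side is Theorem \ref{BG} rewritten via Theorem \ref{main} as a double iterated residue in the two bundle generators with integrand built from $\theta(z-\tfrac12)/\theta(z)$, evaluated by Taylor expansion at the triple poles using the parity of $\theta$, which indeed yields the shape $A(\tau)+B(\tau)(j^2+k^2-jk)$ exactly as in the paper's computation. The only slight imprecision is the degree function, which in the paper is the constant $\tfrac12$ on all six ray generators (the elliptic-genus normalization) rather than one attached to an ample class, but this does not change the argument.
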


\vskip .2cm

Our formula (\ref{twistedRR}) can be viewed as a twisted version of the classical Rogers--Ramanujan type formula (\ref{RR}), in the special case when $j=k=0$, this formula reduces to the square of formula (\ref{RR}), see Section \ref{classic examples} for details.

\vskip.2cm
For Hirzebruch surface  $F_k=\mathbb{C} P(\underline{\mathbb{C}}\oplus\mathcal{O}(k))$, we have  theta function identity:

\vskip .2cm

\begin{proposition}[\protect Proposition \ref{Hir}] When $\alpha_1, \alpha_2, \alpha_3, \alpha_4 \notin \mathbb{Z}$ are complex numbers, one has
	\begin{align*}
					&\sum_{a,b\in \mathbb{Z}} \frac{(1-e^{2\pi i (\alpha_3+\alpha_4)})(1-e^{2\pi i (\alpha_1+\alpha_2)}q^{ka})}{(1-e^{2\pi i \alpha_4}q^a)(1-e^{2\pi i \alpha_1}q^b)(1-e^{2\pi i \alpha_3}q^{-a})(1-e^{2\pi i \alpha_2}q^{ka-b}) }\\
			=&\frac{1}{(2\pi i)^2} \Big\{\Big(\frac{\theta'(-\alpha_3,\tau)}{\theta(-\alpha_3,\tau)}+ \frac{\theta'(-\alpha_4,\tau)}{\theta(-\alpha_4,\tau)}\Big)\Big(\frac{\theta'(-\alpha_1,\tau)}{\theta(-\alpha_1,\tau)}+ \frac{\theta'(-\alpha_2,\tau)}{\theta(-\alpha_2,\tau)}\Big)\\
& +\frac{k}{2}\Big(\frac{\theta''(-\alpha_3,\tau)}{\theta(-\alpha_3,\tau)}- \frac{\theta''(-\alpha_4,\tau)}{\theta(-\alpha_4,\tau)} \Big)\Big\}
	 \end{align*}
where  $q=\exp^{2\pi i\tau}$.
\end{proposition}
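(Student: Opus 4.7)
The plan is to evaluate the iterated residue corresponding to the Borisov--Gunnells toric form of $F_k$ in two different ways: an expansion of the theta factors into lattice sums produces the LHS, while direct residue extraction produces the RHS. View the Hirzebruch surface $F_k$ as the generalized Bott manifold with $n = 2$, $B_1 = \mathbb{C}P^1$ (so $n_1 = 1$, $x_{11} = 0$, and $f_1(u_1) = u_1^2$), and $B_2 = F_k$ with $n_2 = 1$, $x_{21} = ku_1$ (so $f_2(u_1, u_2) = u_2(u_2 + ku_1)$). The four torus-invariant prime divisors of $F_k$ carry the divisor Chern roots $u_1, u_1, u_2, u_2 + ku_1$, to which we attach the degree parameters $\alpha_3, \alpha_4, \alpha_1, \alpha_2$ respectively. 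Following the toric-form formalism of the preceding section, the toric form of $F_k$ is written as the iterated residue $\mathrm{Res}_0\{\mathrm{Res}_0\{ g(u_1, u_2)/(u_1^2 \cdot u_2(u_2 + ku_1))\, du_1\}\, du_2\}$, where $g$ is an explicit product of theta factors indexed by the four divisors, together with numerator factors that will produce the $(1 - e^{2\pi i(\alpha_3 + \alpha_4)})$ and $(1 - e^{2\pi i(\alpha_1 + \alpha_2)} q^{ka})$ appearing on the LHS.

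For the LHS, insert into each theta factor of $g$ a Jacobi/Kronecker-type lattice expansion of the schematic form $1/\theta(u - \alpha, \tau) \leftrightarrow \sum_{n \in \mathbb{Z}} e^{2\pi i n u}/(1 - e^{2\pi i \alpha} q^n)$ (up to holomorphic prefactors that cancel against the numerator twist). Interchanging the iterated residue with the resulting double sum over $(a, b) \in \mathbb{Z}^2$ and evaluating the simple poles of each summand at $u_1 = u_2 = 0$ yields, term by term, the summand of the LHS; the exponents $a, b, -a, ka - b$ come from evaluating the four divisor arguments at these poles and tracking the $ku_1$ shift transferred from the partial-fraction partner at $u_2 + ku_1 \mapsto 0$.

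For the RHS, compute the same iterated residue without expansion. The inner residue $\mathrm{Res}_{u_2 = 0}$ is a simple-pole extraction, facilitated by partial-fractioning $1/(u_2(u_2 + ku_1))$, and yields a function of $u_1$ that is holomorphic at $u_1 = 0$. The outer residue is a \emph{double} residue because $f_1 = u_1^2$, hence equals $\partial_{u_1}|_{u_1 = 0}$. Differentiating the theta factors depending on $u_1$ and applying the logarithmic identities for $\theta'(-\alpha_i)/\theta(-\alpha_i)$ and $\theta''(-\alpha_i)/\theta(-\alpha_i)$ assembles the first line of the RHS (the product of sums of $\theta'/\theta$) from the factors attached to $\alpha_1, \alpha_2$ (via the $u_2$-residue) multiplied with those attached to $\alpha_3, \alpha_4$ (via the $u_1$-derivative hitting their theta factors). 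The $k/2$-weighted $\theta''/\theta$-combination in the second line arises from $\partial_{u_1}$ acting on the $ku_1$ shift inside the partial-fraction piece at $u_2 + ku_1 \mapsto 0$. The main obstacle is this last step: the two divisors with parameters $\alpha_3, \alpha_4$ are cohomologous but only one of them appears shifted by $ku_1$ in the $u_2$-denominator, so the direct residue must be arranged with care to produce the asymmetric combination $\theta''(-\alpha_3, \tau)/\theta(-\alpha_3, \tau) - \theta''(-\alpha_4, \tau)/\theta(-\alpha_4, \tau)$ with precisely the coefficient $k/2$.
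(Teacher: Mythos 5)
The proposal has several genuine gaps. First, the degree labeling is swapped relative to what the statement requires: you attach $\alpha_3,\alpha_4$ to the two base divisors (both of class $u_1$) and $\alpha_1,\alpha_2$ to the fiber divisors $u_2$ and $u_2+ku_1$, but then the $k/2$-term your residue computation would produce is asymmetric in the \emph{fiber} pair, i.e.\ it comes out as $\frac{k}{2}\bigl(\theta''(-\alpha_2)/\theta(-\alpha_2)-\theta''(-\alpha_1)/\theta(-\alpha_1)\bigr)$, not in $\alpha_3,\alpha_4$ as the Proposition asserts. The paper's proof attaches $\alpha_1,\alpha_2$ to the base pair and $\alpha_3,\alpha_4$ to the fiber pair precisely so the $k$-twist lands in $\alpha_3,\alpha_4$. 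Relatedly, your sentence ``the two divisors with parameters $\alpha_3,\alpha_4$ are cohomologous but only one of them appears shifted by $ku_1$'' is internally inconsistent with your own labeling: if both $\alpha_3,\alpha_4$ sit on $u_1$-divisors, neither of them sees the $ku_1$ shift.

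Second, your description of the iterated residue contradicts both your own displayed formula and the paper's Theorem~\ref{main}. The display has $du_1$ innermost and $du_2$ outermost, which is what Theorem~\ref{main} requires (the inner residue is taken over the base variable $u_1$, where $\theta^2(u_1)$ gives a \emph{double} pole; the outer is over $u_2$). Your prose reverses this, taking $\mathrm{Res}_{u_2=0}$ first. With the paper's Definition~\ref{iterated residue} (residue at $0$ only), the swapped order is simply wrong: already on the generator $u_1u_2$ it gives $0$ instead of $1$. The only way your claimed pole structure (``inner simple, result holomorphic in $u_1$, outer double'') can be right is if the $u_2$-``residue'' secretly means the total contour residue picking up both $u_2=0$ \emph{and} $u_2=-ku_1$; then the $\theta(ku_1)$-poles cancel and the remaining $u_1$-pole is double. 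That works, but it is not the paper's iterated residue, and you would need to argue the equivalence rather than invoke ``partial-fractioning'' schematically. Finally, the LHS derivation via a ``Kronecker-type lattice expansion of $1/\theta$'' is not what the paper does (and the stated expansion is not literally correct): the paper obtains the left-hand sum directly from the Borisov--Gunnells definition of $f_{N,\deg}$ by writing out the geometric series over all cones of the $F_k$-fan and recombining, with no theta-function expansion or residue interchange.
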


\vskip .2cm

In the special case when $k=0$, this formula implies  the following simple  Ramanujan type identity
$$\sum_{n=1}^{\infty}\frac{n\cdot q^n}{1-q^n}=\sum_{n=1}^{\infty}\frac{q^n}{(1-q^n)^2}.$$

We are also able to obtain the following vanishing result for some high dimensional generalized Bott manifolds.

\vskip .2cm

\begin{theorem} [\protect Theorem \ref{vanish toric}]
Let $V$ be the toric variety $\CC P(\eta^{\otimes i_1}\oplus \eta^{\otimes i_2}\oplus \eta^{\otimes i_3}\oplus \underline{\mathbb{C}})$ over $\CC P^{n_1}$, where $\eta$ denote the tautological bundle of $\CC P^{n_1}$.
	Assume that $(i_1, i_2, i_3)$ are coprime such that $ \ (i_1,\ i_1-i_2,\ i_1-i_3), \ (i_2,\ i_2-i_1,\ i_2-i_3), \ (i_3,\ i_3-i_1,i\ _3-i_2)$ are also coprime respectively, $\sum_{j=0}^{3} \alpha_{n_1+2+j}\in \mathbb{Z}$, and $\sum_{i=1}^{n_1+1} \alpha_i- \sum_{j=1}^{3}i_j \alpha_{n_1+2+j}\in \mathbb{Z}$, then the toric form $f_{N,\deg}(q)=0$, the definition of toric form can be found in Section~\ref{toric forms}.
\end{theorem}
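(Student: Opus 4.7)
The plan is to use the iterated residue formula of Theorem~\ref{main} together with the Borisov--Gunnells construction to express the toric form $f_{N,\deg}(q)$ as a two-variable iterated residue of an explicit theta-function integrand, and then to show that this iterated residue vanishes by exhibiting a symmetry of the integrand that is guaranteed precisely by the stated integrality and coprimality hypotheses.

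Since $V=\CC P(\eta^{\otimes i_1}\oplus\eta^{\otimes i_2}\oplus\eta^{\otimes i_3}\oplus\underline{\CC})\to\CC P^{n_1}$, its cohomology ring is generated by $u_1$ (base) and $u_2$ (fiber), and the Chern roots of the twisting bundle are $i_1u_1,i_2u_1,i_3u_1$. First I would combine Theorem~\ref{main1} with the definition of the Borisov--Gunnells toric form (as a Hirzebruch--Riemann--Roch integral of a theta-characteristic class) to write
$$
f_{N,\deg}(q)=\mathrm{Res}_0\bigl\{\mathrm{Res}_0\,F(u_1,u_2;\tau)\,du_2\bigr\}\,du_1,
$$
where $F$ is a product of theta-function quotients, one factor per ray of the fan: the $n_1+1$ base rays contribute factors involving $\theta(\alpha_i+u_1,\tau)$, and the four fiber rays contribute $\theta(\alpha_{n_1+2}+u_2,\tau)$ together with $\theta(\alpha_{n_1+2+j}+u_2+i_j u_1,\tau)$ for $j=1,2,3$.

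Next I would compute the inner residue in $u_2$. The hypothesis $\sum_{j=0}^{3}\alpha_{n_1+2+j}\in\Z$ is engineered so that the product of the four fiber theta-factors behaves well under the quasi-periodicity $\theta(z+1,\tau)=-\theta(z,\tau)$ and $\theta(z+\tau,\tau)=-q^{-1/2}e^{-2\pi i z}\theta(z,\tau)$, so the sum of the four polar contributions combines into a function of $u_1$ whose quasi-periodic defects are controlled. The coprimality of $(i_1,i_2,i_3)$ and of each triple $(i_k,i_k-i_j,i_k-i_l)$ is needed here to guarantee that the four fiber poles are simple and that their residue contributions remain genuinely distinct after expansion, rather than collapsing into higher-order poles.

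I would then compute the outer residue in $u_1$. At this stage the twisted integrality condition $\sum_{i=1}^{n_1+1}\alpha_i-\sum_{j=1}^{3}i_j\alpha_{n_1+2+j}\in\Z$ enters: it compensates exactly for the Chern-root shifts $u_2\mapsto u_2+i_j u_1$ inherited from the inner residue against the base factors $\prod_i\theta(\alpha_i+u_1,\tau)^{-1}$, so that the resulting one-variable integrand in $u_1$ is antisymmetric under $u_1\mapsto -u_1$ composed with a permutation of the three nonzero Chern roots enabled by the coprimality. Since the only pole is at $u_1=0$, this antisymmetry forces the outer residue, and hence $f_{N,\deg}(q)$, to vanish.

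The principal obstacle I expect is pinning down the exact antisymmetry in the last step: one must track the quasi-periodic shifts of $\theta$ through two successive residues and check that the two integrality conditions are simultaneously what is needed to produce a true $u_1\mapsto -u_1$ antisymmetry rather than a weaker relation. The coprimality hypotheses should not be viewed as merely arithmetic; they are essential in ensuring that all intermediate poles stay simple, so that the residue computation is valid and the antisymmetry remains visible rather than being obscured by higher-order pole collisions.
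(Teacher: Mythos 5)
Your setup agrees with the paper: express $f_{N,\deg}(q)$ via Theorem \ref{BG} as an integral of theta-quotients over $V$ and convert it to a two-variable iterated residue by Theorem \ref{main} (though note the paper's iterated residue requires the base variable innermost and the fiber variable outermost, the opposite of the order you propose; since the ideal is degenerate the order is part of the definition and should not be swapped without justification). The genuine gap is in your vanishing mechanism. The paper reduces Theorem \ref{vanish toric} to the argument of Theorem \ref{technique}: the integrality hypotheses on the $\alpha$'s make the inner one-form $\omega(u)$ doubly periodic with respect to $\Z+\Z\tau$, so it descends to $T^2$ and the residue theorem on the torus applies; thus $\mathrm{Res}_0\,\omega(u)$ equals minus the sum of the residues at the other poles $\frac{a+b\tau-v}{i_j}$ (simple because $(i_1,i_2,i_3)$ are coprime). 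One then applies the residue theorem a \emph{second} time, in $v$, to the resulting one-forms, whose ellipticity is exactly what the coprimality of the triples $(i_k,\,i_k-i_j,\,i_k-i_l)$ guarantees, and explicit quasi-periodicity computations show the various contributions pair off with ratios $\pm1$ and cancel, forcing the original residue to be zero.

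Your proposed mechanism — an antisymmetry of the outer integrand under $u_1\mapsto-u_1$ killing the outer residue because ``the only pole is at $u_1=0$'' — fails on two counts. First, oddness of a meromorphic one-variable function does not annihilate its residue at $0$ (the function $1/u$ is odd with residue $1$); only evenness would, and no such symmetry is exhibited or made plausible by the hypotheses. Second, after the inner residue the remaining integrand is a ratio of theta functions whose denominator vanishes at many points of $\CC$ (the theta zeros form the full lattice, suitably rescaled by $i_j$, $i_k-i_j$, etc.), so it is simply false that $0$ is the only pole; indeed the entire point of the paper's argument is to exploit those other poles through the global residue theorem on the torus rather than to show they are absent. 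Relatedly, you misplace the role of coprimality: it is not needed to make the ``four fiber poles'' at the origin simple (the pole at $0$ has high order and is treated as such), but to ensure that the nonzero poles are simple and that the second-stage forms in the outer variable are again elliptic, so the residue theorem can be applied twice. Without the ellipticity-plus-double-application-of-the-residue-theorem structure, your outline does not yield the vanishing.
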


\vskip .2cm

\begin{remark}
	The coprime condition seems to be a bit strict. We will provide an example to explain it. Let $(i_1, i_2, i_3)=(1,3,4)$, we can check that $(1,3,4),\ (1,2,3),\ (3,2,1)$ and $  (4,3,1)$ are coprime respectively. It is possible to transfer iterated residue into residues at simple poles by residue theorem when coprime condition holds.
\end{remark}
\vskip .2cm

Another application is about Witten genus, which plays an important role in index theory. Let $M$ be a $4m$ dimensional compact oriented smooth manifold. Let $\{\pm
2\pi \sqrt{-1}z_{j},1\leq j\leq 2m\}$ denote the formal Chern roots of $T_{%
	\mathbb{C}}M $, the complexification of the tangent vector bundle $TM$ of $M$%
. Then the famous Witten genus of $M$ can be written as (c.f. \cite{Liu})%
\begin{equation*}
	\varphi_W(M)=\left\langle \left( \prod_{j=1}^{2m}z_{j}\frac{\theta ^{\prime }(0,\tau
		)}{\theta (z_{j},\tau )}\right) ,[M]\right\rangle \in \mathbb{Q}[[q]],
\end{equation*}%
with $\tau \in \mathbb{H}$, the upper half-plane, and $q=e^{\pi \sqrt{-1}%
	\tau }$. The Witten genus was first introduced in \cite{W} and can be viewed as the
loop space analogue of the $\widehat{A} $-genus. It can be expressed as a $q$%
-deformed $\widehat{A}$-genus as
\begin{equation*}
	\varphi_W(M)=\left\langle \widehat{A}(TM)\mathrm{ch}\left( \Theta \left( T_{\mathbb{C%
	}}M\right) \right) ,[M]\right\rangle ,
\end{equation*}%
where
\begin{equation*}
	\Theta (T_{\mathbb{C}}M)=\overset{\infty }{\underset{n=1}{\otimes }}%
	S_{q^{2n}}(\widetilde{T_{\mathbb{C}}M}),\ \ {\rm with}\ \
	\widetilde{T_{\mathbb{C}}M}=T_{\mathbb{C}}M-{\mathbb C}^{4m},
\end{equation*}%
is the Witten bundle defined in \cite{W}. When the manifold $M$ is Spin, according to the
Atiyah-Singer index theorem \cite{AS},  the Witten genus can be expressed analytically as the index of twisted Dirac operators, $\varphi_W(M)=\mathrm{ind}(D\otimes \Theta \left( T_{\mathbb{C%
}}M\right))\in \mathbb{Z}[[q]]$, where $D$ is the Atiyah-Singer Spin Dirac operator on $M$ (c.f. \cite{HBJ94}). Moreover, if $M$ is String,  i.e. $\frac{1}{2}p_{1}(TM)=0$, or even weaker, if $M$ is Spin and the
first rational Pontryagin class of $M$ vanishes, then $\varphi_W(M)$ is a
modular form of weight $2k$ over $SL(2,\mathbb{Z})$ with integral
Fourier expansion (c.f. \cite{Za}). The homotopy theoretical
refinements of the Witten genus on String manifolds lead to the
theory of tmf ({\em topological modular form}) developed by Hopkins and
Miller \cite {Hop}. The String condition is the orientablity condition for this generalized cohomology theory.

\vskip .2cm

The Lichnerowicz theorem\cite{Lic} asserts that if a closed spin manifold carries a Riemannian metric of positive scalar curvature, then its $\widehat{A}$ genus vanishes.  Along this line, Stolz conjectured \cite{St1} that the Witten genus of a string manifold carrying positive Ricci curvature metric vanishes. There are two kinds of vanishing results to support the Stolz's conjecture. One is the theorem asserting that every string manifold carrying a nontrivial action of a semi-simple Lie group $G$ has vanishing Witten genus (\cite{De}, \cite{Liu2}). The other is the Landweber-Stong vanishing theorem (c.f. page 89 in \cite{HBJ94}) asserting that a string complete intersection in complex projective spaces has vanishing Witten genus. The proof uses the calculation of residues.  The Landweber-Stong type vanishing results were also obtained  for the following objects: string complete intersections in products of complex projective spaces (\cite{CH}, \cite{CHZ}), string complete intersections in products of Grassmannians and flag manifolds (\cite{ZhZh,Zh}).

\vskip .2cm

In this paper, by applying the iterated residue, we  can prove a Landweber-Stong type vanishing theorem for the Witten genus of complete intersections in two-staged generalized Bott manifolds.



\vskip .2cm

\begin{theorem}[Theorem \ref{technique}]\label{1.2.}
 Suppose that $H^{\bf I}_{n_1, 3}(d_1, d_2; d_3,d_4)$ is a string complete intersection. If ${\bf I}=(i,\  j,\  k)$ are coprime such that $ \ (i,\ i-j,\ i-k), \ (j,\ j-i,\ j-k), \ (k,\ k-i,\ k-j)$ are also coprime respectively, then the Witten genus
	$$\varphi_W(H^{\bf I}_{n_1, 3}(d_1, d_2; d_3,d_4))=0.$$
\end{theorem}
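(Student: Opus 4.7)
The plan is to apply the iterated-residue formula of Theorem \ref{main1} (once per defining hypersurface) to express the Witten genus as a double iterated residue, use the String hypothesis to exhibit the resulting integrand as elliptic in each variable, and then invoke a two-variable form of the Landweber--Stong sum-of-residues argument to force vanishing.

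For the Witten genus, $Q(x)=x\theta'(0,\tau)/\theta(x,\tau)$, so $f(x)=\theta(x,\tau)/\theta'(0,\tau)$ in the notation of Theorem \ref{main1}. With ${\bf I}=(i,j,k)$ the ambient space is $B_2=\mathbb{C}P(\eta^{\otimes i}\oplus\eta^{\otimes j}\oplus\eta^{\otimes k}\oplus\underline{\mathbb{C}})$ over $\mathbb{C}P^{n_1}$, so $n_2=3$ and $x_{21}=iu_1$, $x_{22}=ju_1$, $x_{23}=ku_1$. Writing $\alpha_1=d_1u_1+d_3u_2$ and $\alpha_2=d_2u_1+d_4u_2$ for the Poincar\'e duals of the two defining hypersurfaces, Theorem \ref{main1} yields
\begin{equation*}
\varphi_W\bigl(H^{\bf I}_{n_1,3}(d_1,d_2;d_3,d_4)\bigr)=\mathrm{Res}_{u_2=0}\,\mathrm{Res}_{u_1=0}\,F(u_1,u_2)\,du_1\,du_2,
\end{equation*}
where
\begin{equation*}
F(u_1,u_2)=\frac{f(\alpha_1)\,f(\alpha_2)}{f(u_1)^{n_1+1}\,f(u_2)\,f(u_2+iu_1)\,f(u_2+ju_1)\,f(u_2+ku_1)}.
\end{equation*}

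Next, the String hypothesis $\tfrac12 p_1(TH)=0$, combined with the implicit Spin hypothesis, translates into degree identities. Computing $p_1(TB_2)=((n_1+1)+i^2+j^2+k^2)u_1^2+2(i+j+k)u_1u_2+4u_2^2$ and $p_1(N)=\alpha_1^2+\alpha_2^2$ and equating via the adjunction formula gives
\begin{equation*}
d_1^2+d_2^2=(n_1+1)+i^2+j^2+k^2,\quad d_1d_3+d_2d_4=i+j+k,\quad d_3^2+d_4^2=4,
\end{equation*}
together with parity conditions on $d_1+d_2$ and $d_3+d_4$. Combined with the theta-function transformation laws $f(x+1)=-f(x)$ and $f(x+\tau)=-q^{-1/2}e^{-2\pi ix}f(x)$, a direct substitution check shows that $F(u_1+m+n\tau,u_2)=F(u_1,u_2)=F(u_1,u_2+m+n\tau)$ for all $m,n\in\mathbb{Z}$: $F$ descends to a meromorphic function on the product torus $(\mathbb{C}/\Lambda)^2$, with $\Lambda=\mathbb{Z}+\tau\mathbb{Z}$, elliptic in each variable.

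With this ellipticity, the residue theorem on $\mathbb{C}/\Lambda$ applied to the inner variable rewrites $\mathrm{Res}_{u_1=0}F\,du_1$ as the negative sum of residues of $F(\cdot,u_2)$ at the other poles, located at $u_1\equiv -u_2/i,\,-u_2/j,\,-u_2/k\pmod\Lambda$ in a fundamental parallelogram. The assumption $\gcd(i,j,k)=1$ ensures each such pole is simple and distinct from $u_1=0$, so the residues are given by the standard simple-pole formula; the layered coprimality hypotheses on the triples $(i,i-j,i-k)$, $(j,j-i,j-k)$, $(k,k-i,k-j)$ then guarantee that after the substitutions $u_1=-u_2/i,\,-u_2/j,\,-u_2/k$ no two of the resulting $u_2$-poles collide, and the three shifted residues combine into a meromorphic function of $u_2$ that is $\Lambda$-elliptic with only a simple pole at $u_2\equiv 0$. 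A second application of the torus residue theorem to the outer residue then gives $\mathrm{Res}_{u_2=0}=0$ and hence $\varphi_W(H^{\bf I}_{n_1,3}(d_1,d_2;d_3,d_4))=0$. The main obstacle is this last combination step: individually, each shifted residue $G_*(u_2):=\mathrm{Res}_{u_1=-u_2/*}F$ is only $*\Lambda$-periodic (the scaling $u_2\mapsto u_2+\lambda$ returns the pole to itself only for $\lambda\in *\Lambda$), so the full $\Lambda$-ellipticity of $G_i+G_j+G_k$ and the absence of higher-order poles at the origin rely on a delicate cancellation enabled by the layered coprimality hypotheses; without these, distinct shifted poles can collide into higher-order poles, breaking the cancellation and invalidating the argument.
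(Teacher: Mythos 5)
Your overall strategy (iterated residue, ellipticity forced by the string condition, residue theorem on the torus) is the same as the paper's, but two genuine gaps remain in your execution. First, your enumeration of the inner poles is wrong: on $\mathbb{C}/\Lambda$ the factor $\theta(u_2+iu_1)$ vanishes at the $i^2$ division points $u_1\equiv\frac{-u_2+a+b\tau}{i}$, $0\le a,b<i$ (and similarly $j^2$, resp.\ $k^2$, points for the other two factors), not just at the three points $u_1\equiv -u_2/i,\,-u_2/j,\,-u_2/k$. These extra residues are not a technicality: in the paper they are exactly the terms $\mathrm{Res}_0\{\mathrm{Res}_{(a+b\tau-v)/i}\,\omega(u)\}dv$, etc., whose cancellation is the heart of the proof. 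Second, and more seriously, your final step is asserted rather than proved: you claim the shifted residues combine into a $\Lambda$-elliptic function of $u_2$ with only a simple pole at $u_2\equiv 0$, while admitting yourself that each $G_*$ is not $\Lambda$-periodic and that the required cancellation is ``delicate''; you never carry it out. As stated the claim is also wrong in detail: the pole of these terms at $u_2=0$ has order up to $n_1+1$ (from $\theta^{n_1+1}(u_1)$ evaluated at $u_1\equiv-u_2/i$), not order one, and a nonconstant elliptic function with a single simple pole cannot exist in any case. Your stated role for the coprimality hypotheses (simplicity and distinctness of the inner poles) is also misplaced: for generic $u_2$ those poles are automatically simple and mutually distinct.

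What the paper actually does at this point, and where the hypotheses really enter, is different: the three contributions from $u\equiv -v/i,\,-v/j,\,-v/k$ are rescaled into one-variable forms such as $\omega(v)=\frac{\theta((d_2i-d_1)v)\,\theta((d_4i-d_3)v)}{\theta^{n_1+1}(-v)\,\theta(iv)\,\theta((i-j)v)\,\theta((i-k)v)}dv$, which descend to $T^2$ (using the string identities again) and, because $(i,\,i-j,\,i-k)$ and its analogues are coprime, have only simple poles away from $0$. A second application of the residue theorem, together with explicit theta-transformation computations, shows each division-point residue of $\omega(v)$ equals the corresponding first-stage term with ratio $+1$, while the cross terms between the three families cancel pairwise with ratio $-1$; summing everything gives $\mathrm{Res}_0\{\mathrm{Res}_0\,\omega(u)\}dv=0$. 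To repair your write-up you must either reproduce this matching, or else work with the full inner residue $\mathrm{Res}_{u_1=0}F\,du_1$ (equivalently the sum over all $i^2+j^2+k^2$ division-point residues), justify from the string hypothesis the coefficientwise degree identities needed for its $\Lambda$-ellipticity in $u_2$, and verify its poles lie only on $\Lambda$ before invoking the torus residue theorem once more; as written, the decisive cancellation is missing.
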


\vskip .2cm

Furthermore, it is also possible to calculate the mod 2 Witten genus of generalized Bott manifolds by making use of Rokhlin congruence formula in \cite{Zh1, Zh2}. We will study this in a forthcoming project.

\vskip .2cm

The paper is organised as follow. In Section \ref{Bott manifold}, we introduce the concept of ``iterated residue" and give an explicit expression of the genus of complete intersections in generalized Bott manifolds. In Section \ref{toric forms}, we apply ``iterated residue" to toric forms and get interesting theta function identities.   In Section \ref{Witten genus}, we will use ``iterated residue" to discuss when  Witten genus of complete intersections in generalized Bott manifolds vanishes and give the proof of  Theorem \ref{1.2.}.

\section{Iterated residue   in generalized Bott manifolds }\label{Bott manifold}

The Bott tower consists of  a family of complex manifolds obtained as the total spaces of iterated bundles over $\CC P^1$ with fibre $\CC P^1$. A Bott tower can be obtained as a limit of the complex structure on  Bott-Samelson varieties \cite{BS}.  Grossberg and Karshon \cite{GK} showed that  the Bott towers form an important family of smooth projective toric varieties. The generalized Bott tower was further introduced by Choi, Masuda and Suh
\cite{CMS} in the study of cohomological rigidity problem.

\vskip .2cm

\begin{definition}(c.f. \cite{CMS})
	A \textbf{generalized Bott tower} of height $n$ is a tower of projective bundles
	$$B_n \overset{p_n}{\longrightarrow} B_{n-1}\overset{p_{n-1}}{\longrightarrow} \cdots \longrightarrow B_2\overset{p_2}{\longrightarrow} B_1\longrightarrow \text{pt}, $$
	where $B_1=\CC P^{n_1}$ and each $B_k$ is the complex projectivization of  sum of $n_k$ complex line bundles and one trivial line bundle over $B_{k-1}$. The fibre of the bundle $p_k: B_k\longrightarrow B_{k-1}$ is $\CC P^{n_k}$.

\vskip .2cm
The last stage $B_n$ in a generalized Bott tower is called a \textbf{generalized Bott manifold}.
\end{definition}

\vskip .2cm

The topology information of generalized Bott manifolds is very clear. Its cohomology ring relies on the following  result:

\vskip .2cm

\begin{theorem}(c.f. \cite[Chapter V]{St})
	Let $\CC P(\xi)$ be the complex projectivization of $\xi$, where 
$\xi$ is a complex $n$-dimensional vector bundle over a finite cell complex $X$.  
 Then $H^*(\CC P(\xi))$ is the quotient of polynomial ring $H^*(X)[u]$ by single relation
	$$u^n+c_1(\xi)u^{n-1}+\cdots+c_n(\xi)=0$$
where  $-u\in H^2(\CC P(\xi))$ is the first Chern class of the tautological line bundle over $\CC P(\xi)$.
\end{theorem}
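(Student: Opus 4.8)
The assertion is the classical projective-bundle (Leray--Hirsch) formula, and the route I would take combines the Leray--Hirsch theorem with the splitting principle. Write $\pi:\CC P(\xi)\to X$ for the bundle projection, and let $\mathcal{O}(-1)\subset\pi^{*}\xi$ be the tautological line bundle, whose fibre over a point $\ell\in\CC P(\xi)$ --- that is, a complex line $\ell\subset\xi_{\pi(\ell)}$ --- is $\ell$ itself. Put $u:=c_1(\mathcal{O}(-1)^{*})=-c_1(\mathcal{O}(-1))\in H^{2}(\CC P(\xi))$, so that $-u$ is the first Chern class of the tautological line bundle exactly as in the statement. Restricted to a fibre $\pi^{-1}(x)\cong\CC P^{n-1}$, the bundle $\mathcal{O}(-1)$ is the ordinary tautological line bundle of $\CC P^{n-1}$; hence $u|_{\pi^{-1}(x)}$ is the standard degree-$2$ generator and the powers $1,u,u^{2},\dots,u^{n-1}$ restrict to an additive basis of $H^{*}(\CC P^{n-1})$.

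First I would invoke the Leray--Hirsch theorem: since $X$ is a finite cell complex, $\pi$ is a locally trivial bundle trivialised over a finite cover, and the global classes $1,u,\dots,u^{n-1}$ restrict fibrewise to a basis, so $H^{*}(\CC P(\xi))$ is a free $H^{*}(X)$-module with basis $\{1,u,\dots,u^{n-1}\}$. (One proves Leray--Hirsch itself by a Mayer--Vietoris induction over a trivialising cover, or by noting that the Leray--Serre spectral sequence of $\pi$ collapses because every fibre class lifts to the total space.) This is the step I expect to be the main obstacle, since it is precisely where the bundle geometry --- rather than formal manipulation of Chern classes --- enters; everything after it is algebra.

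Next I would produce the single relation. Twisting the tautological subbundle inclusion $\mathcal{O}(-1)\hookrightarrow\pi^{*}\xi$ by $\mathcal{O}(1)=\mathcal{O}(-1)^{*}$ yields a subbundle inclusion $\underline{\CC}\hookrightarrow\pi^{*}\xi\otimes\mathcal{O}(1)$, i.e.\ a nowhere-vanishing section of $\pi^{*}\xi\otimes\mathcal{O}(1)$; hence its top Chern class $c_n(\pi^{*}\xi\otimes\mathcal{O}(1))$ vanishes. By the splitting principle, if $x_1,\dots,x_n$ denote the formal Chern roots of $\xi$ (pulled back to $\CC P(\xi)$), then $\pi^{*}\xi\otimes\mathcal{O}(1)$ has Chern roots $u+x_1,\dots,u+x_n$, whence
$$0=c_n\big(\pi^{*}\xi\otimes\mathcal{O}(1)\big)=\prod_{i=1}^{n}(u+x_i)=u^{n}+c_1(\xi)u^{n-1}+\cdots+c_n(\xi)$$
in $H^{2n}(\CC P(\xi))$, where each $c_j(\xi)$ is read via $\pi^{*}$. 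Therefore there is a well-defined graded ring homomorphism
$$\Phi:\;H^{*}(X)[u]\big/\big\langle u^{n}+c_1(\xi)u^{n-1}+\cdots+c_n(\xi)\big\rangle\longrightarrow H^{*}(\CC P(\xi)),\qquad u\mapsto u,$$
which is surjective by the Leray--Hirsch decomposition $H^{*}(\CC P(\xi))=\bigoplus_{j=0}^{n-1}\pi^{*}H^{*}(X)\cdot u^{j}$.

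Finally I would check $\Phi$ is injective by a rank count. Dividing $H^{*}(X)[u]$ by a monic degree-$n$ polynomial in $u$ yields a free $H^{*}(X)$-module with basis the monomials $1,u,\dots,u^{n-1}$, and $\Phi$ carries this basis bijectively onto the Leray--Hirsch basis $\{1,u,\dots,u^{n-1}\}$ of $H^{*}(\CC P(\xi))$; a module map taking a basis bijectively to a basis is an isomorphism, so $\Phi$ is an isomorphism of graded rings. This proves the theorem, modulo the Leray--Hirsch input flagged in the second paragraph.
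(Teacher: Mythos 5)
The paper does not prove this theorem; it simply records it as a known result with a pointer to Stong's \emph{Notes on Cobordism Theory}, Chapter V, so there is no in-paper argument to compare against. Your proof is correct and is exactly the standard derivation one finds in the references: Leray--Hirsch (valid here since $X$ is a finite cell complex, so $\CC P(\xi)$ is paracompact and the fibrewise basis $1,u,\dots,u^{n-1}$ extends) gives the free $H^*(X)$-module structure; twisting the tautological inclusion $\mathcal{O}(-1)\hookrightarrow\pi^*\xi$ by $\mathcal{O}(1)$ produces a nowhere-vanishing section of $\pi^*\xi\otimes\mathcal{O}(1)$, so its top Chern class $\prod_i(u+x_i)=u^n+c_1(\xi)u^{n-1}+\cdots+c_n(\xi)$ vanishes; and the rank count over $H^*(X)$ identifies the quotient ring with $H^{*}(\CC P(\xi))$. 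The sign convention $u=-c_1(\mathcal{O}(-1))$ matches the statement, and the splitting-principle computation of the Chern roots of $\pi^*\xi\otimes\mathcal{O}(1)$ is applied correctly. Nothing is missing; this is precisely the argument Stong's cited chapter (and e.g.\ Bott--Tu or Milnor--Stasheff) supplies.
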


\vskip .2cm

\begin{corollary}
	Let $-u_k$ be the first Chern class of tautological line bundle $\eta_k$ over 
the generalized Bott manifold
$B_k=\CC P(\xi_k\oplus \underline{\CC})$.  Then
	$$H^*(B_n)\cong \Z[u_1,\cdots,u_n]/\langle f_i(u_1,\cdots,u_n):\ i=1,\cdots, n\rangle,$$
	where $f_i(u_1,\cdots,u_n)=u_i\prod_{j=1}^{n_i}(u_i+x_{ij})$, and $\{ x_{k1},x_{k2},\cdots,x_{kn_k}\}$ consists of the formal Chern roots of $\xi_k$.
\end{corollary}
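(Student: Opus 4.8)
The plan is to induct on the height $n$ of the generalized Bott tower, feeding into the Borel--Hirzebruch description of the cohomology of a projectivization stated just above. For the base case $n=1$ we have $B_1=\CC P^{n_1}=\CC P(\underline{\CC}^{\,n_1+1})$ over a point, so $H^*(B_1)\cong\Z[u_1]/\langle u_1^{n_1+1}\rangle$; writing $\xi_1=\underline{\CC}^{\,n_1}$, all its Chern roots vanish and $f_1(u_1)=u_1\prod_{j=1}^{n_1}(u_1+0)=u_1^{n_1+1}$, which is exactly the claimed relation. (Equivalently one may start the induction at $n=0$ with $B_0=\mathrm{pt}$ and the empty relation set.)

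For the inductive step, assume $H^*(B_{n-1})\cong\Z[u_1,\dots,u_{n-1}]/\langle f_1,\dots,f_{n-1}\rangle$. Since $B_n=\CC P(\xi_n\oplus\underline{\CC})$ is the projectivization of the rank $(n_n+1)$ bundle $\xi_n\oplus\underline{\CC}$ over $B_{n-1}$, the cited theorem gives
$$H^*(B_n)\cong H^*(B_{n-1})[u_n]\Big/\Big\langle\,\sum_{i=0}^{n_n+1}c_i(\xi_n\oplus\underline{\CC})\,u_n^{\,n_n+1-i}\,\Big\rangle .$$
I would then simplify the single defining relation: a trivial line bundle has total Chern class $1$, so the Whitney sum formula gives $c(\xi_n\oplus\underline{\CC})=c(\xi_n)$; hence $c_{n_n+1}(\xi_n\oplus\underline{\CC})=0$ and $c_i(\xi_n)=e_i(x_{n1},\dots,x_{nn_n})$ is the $i$-th elementary symmetric polynomial in the Chern roots. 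Therefore the relation becomes
$$u_n^{\,n_n+1}+e_1u_n^{\,n_n}+\dots+e_{n_n}u_n\;=\;u_n\prod_{j=1}^{n_n}(u_n+x_{nj})\;=\;f_n(u_1,\dots,u_n).$$
To read $f_n$ literally as an element of $\Z[u_1,\dots,u_n]$ I would use that $\xi_n$ is a Whitney sum of $n_n$ line bundles and that $H^2(B_{n-1})$ is freely generated by $u_1,\dots,u_{n-1}$, so each Chern root $x_{nj}$ is a genuine $\Z$-linear form in $u_1,\dots,u_{n-1}$ (and the choice is canonical, since $f_1,\dots,f_{n-1}$ all have degree $\ge 4$ and impose nothing in degree $2$).

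Finally, combining the above with the inductive presentation and the elementary ring isomorphism $(S/I)[u]/\langle g\rangle\cong S[u]/\langle I,g\rangle$, applied with $S=\Z[u_1,\dots,u_{n-1}]$, $I=\langle f_1,\dots,f_{n-1}\rangle$, $u=u_n$ and $g=f_n$, yields $H^*(B_n)\cong\Z[u_1,\dots,u_n]/\langle f_1,\dots,f_n\rangle$, completing the induction. The only substantive point — essentially the crux — is the identification of the abstract relation produced by the projective bundle theorem with the polynomial $f_n$; once one observes $c(\xi_n\oplus\underline{\CC})=c(\xi_n)$ and factors out $u_n$, the remainder is bookkeeping about lifting formal Chern roots to honest linear forms in the $u_i$.
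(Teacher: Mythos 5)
Your proof is correct, and since the paper states this corollary without an explicit argument (it follows formally from the preceding projective bundle theorem of Stong), your inductive unwinding of that theorem is precisely the intended reasoning. You also take care of the only delicate point — that $H^2(B_{n-1})$ is free on $u_1,\dots,u_{n-1}$ because each $f_i$ has cohomological degree $2(n_i+1)\ge 4$, so the formal Chern roots $x_{nj}$ lift to well-defined integral linear forms and the ideal $\langle f_1,\dots,f_n\rangle\subset\Z[u_1,\dots,u_n]$ is independent of the choice of lift.
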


\vskip .2cm
We note that the  tangential bundle of $B_{k+1}$ (over $B_k$) is clear, i.e.,
$$T B_{k+1}\oplus \underline{\CC}\cong p^*(T B_k)\oplus (\bar{\eta}_k\otimes p^*(\xi_k\oplus \underline{\CC})).$$

The generalized Bott manifolds provide a rich family of toric varieties on which we can calculate all kinds of characteristic numbers and study their geometry and topology.
\vskip.2cm
In \cite{LHLL}, the authors introduced the concept of {\em twisted Milnor hypersurfaces}, which is a generalization of Milnor hypersurfaces. 
Consider the projective bundle over $\mathbb{C} P^{n_{1}}$ with fiber $\mathbb{C} P^{n_{2}}$, i.e.
$$V=\mathbb{C} P(\eta^{\otimes i_{1}}\oplus\cdots\oplus\eta^{\otimes i_{n_{2}}}\oplus\underline{\mathbb{C}})\rightarrow \mathbb{C} P^{n_1},$$
where $\eta$ is the tautological line bundle over $\mathbb{C} P^{n_{1}}$ and $\underline{\mathbb{C}}$ is the trivial line bundle.

\vskip.2cm

Let $\gamma$ be the vertical tautological line bundle over $V$, $u=c_1(\overline{\eta}), v=c_1(\overline{\gamma})\in H^2(V; \Z)$. Denote $\mathbf{I}=(i_{1},\cdots ,i_{n_{2}})$ be the index.

\vskip.2cm

\begin{definition}  We call the smooth hypersurface  Poincar\'e dual to $d_1u+d_2v$ in $V$  {\bf twisted Milnor hypersurface}, denoted by $H_{n_1,n_2}^{\bf I}(d_1,d_2)$.
\end{definition}

\vskip.2cm

\begin{remark}
	We can also consider hypersurfaces or complete intersections on generalized Bott manifolds, such as $H_{n_1,n_2}^{\bf I}(d_1,d_2; d_3, d_4)$ denote the complete intersection Poincar\'e dual to $(d_1u+d_2v)\cdot (d_3u+d_4v)$ in $V$.
	Although generalized Bott manifolds are toric varieties,  their  topological complete intersections are  not necessarily algebraic (\cite{LHLL}).
\end{remark}

\vskip.2cm


Denote $X$ be the submanifold of $B_n$ Poincar\'e dual to $a_1u_1+\cdots+a_nu_n \in H^2(B_n;\Z)$.
Suppose $\nu$ denotes the normal bundle of inclusion $i: X\hookrightarrow B_n$. Then \\ $c_1(\nu)=i^*(a_1u_1+\cdots+a_nu_n)$, while  $i^*(T B_n)\cong T X \oplus \nu$.

\vskip.2cm
For any genus $\psi$ with the characteristic power series $Q(x)=x/f(x)$, we have
\begin{equation}\label{formula} \psi(X)=\big\langle \big(\frac{u_1}{f(u_1)}\big)^{n_1+1}\cdot f(a_1u_1+\cdots+a_nu_n)\cdot \prod_{i=1}^{n}\frac{u_i}{f(u_i)}\cdot\prod_{j=1}^{n_i} \frac{u_i+x_{ij}}{f(u_i+x_{ij})}, [B_n]\big\rangle.
\end{equation}

\vskip.2cm

If $B_n$ is a product of projective spaces,  $\psi(X)$ can be simplified into a very neat  expression. As to general $B_n$, the ideal $f_i $'s in $H^*(B_n; \Z)$ will be the biggest obstacle in calculating genus,  so we have to cope with the relation between $u_i$'s carefully.

\vskip.2cm

Motivated by the idea of Witten \cite{W}, we try to reduce $\psi(X)$ to  Grothendieck residue. Recall the global residue theorem

\vskip.2cm

\begin{theorem}$(${\bf Global Residue Theorem \cite[Chapter 5]{GH}}$)$
	Let $M$ be a compact complex $n$-manifold, $D=D_1+\cdots+D_n$ be a divisor on $M$ such that $D_1,\cdots,D_n$ are effective divisors on $M$ and the intersection $D_1\cap\cdots\cap D_n$ is discrete, hence finite-set of points in $M$. Then for any $\omega\in H^0(M,\Omega^n(D))$,
	$$\sum_{P\in \{D_1\cap\cdots\cap D_n \}} \mathrm{Res}_P\ \omega=0.$$
\end{theorem}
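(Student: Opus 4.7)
The plan is to prove the theorem by constructing a single global real $n$-cycle $\Gamma \subset M$ and evaluating $\int_\Gamma \omega$ in two different ways: locally, $\Gamma$ decomposes into small tori around the points of $S := D_1 \cap \cdots \cap D_n$ which reassemble the Grothendieck residues, while globally $\Gamma$ bounds an $(n+1)$-chain disjoint from $D$, so Stokes' theorem forces $\int_\Gamma \omega = 0$.

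First I would globalize the Leray--Grothendieck tori. Each effective divisor $D_i$ is the zero locus of a holomorphic section $s_i$ of a line bundle $L_i \to M$; fix Hermitian metrics $h_i$ on the $L_i$ and form the continuous map $F := (|s_1|_{h_1}, \ldots, |s_n|_{h_n}) : M \to \mathbb{R}_{\geq 0}^n$. By Sard's theorem a generic small $\delta = (\delta_1, \ldots, \delta_n) \in \mathbb{R}_{>0}^n$ is a regular value of $F$, so $\Gamma := F^{-1}(\delta)$ is a smooth closed real $n$-submanifold of $M$, oriented by $d\arg s_1 \wedge \cdots \wedge d\arg s_n$ computed in local trivializations. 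Since $F^{-1}(0) = S$ is finite, compactness of $M$ together with continuity of $F$ forces $\Gamma$ to lie inside any preassigned neighborhood of $S$ once $\delta$ is small enough. Inside a coordinate ball $U_P$ at $P \in S$ with $D_i \cap U_P = \{f_i^{(P)} = 0\}$, writing $s_i = f_i^{(P)} e_i^{(P)}$ in a local frame shows $\Gamma \cap U_P = \{|f_i^{(P)}(z)| = \delta_i/|e_i^{(P)}(z)|_{h_i}\}$, which is smoothly isotopic inside $U_P \setminus D$ to the standard Grothendieck torus $T_P$ that computes $\mathrm{Res}_P \omega$ via formula (\ref{multiresidue}). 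Summing over $P$ yields $\int_\Gamma \omega = (2\pi i)^n \sum_{P \in S} \mathrm{Res}_P \omega$.

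Next I would produce an explicit bounding chain. Consider $B := F^{-1}\bigl(\{\delta_1\} \times \cdots \times \{\delta_{n-1}\} \times [\delta_n, \infty)\bigr)$, a compact real $(n+1)$-chain whose topological boundary is $\pm \Gamma$ and which is disjoint from $D$ because $|s_i|_{h_i} \geq \delta_i > 0$ pointwise on $B$. On $M \setminus D$ the form $\omega$ is holomorphic of type $(n,0)$, hence smooth with $d\omega = \bar\partial\omega = 0$, so Stokes' theorem yields $\int_\Gamma \omega = \pm \int_B d\omega = 0$. Combined with the local identification above, this gives the desired vanishing $\sum_{P \in S} \mathrm{Res}_P \omega = 0$.

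I expect the main obstacle to be the localization step: rigorously verifying the transversality needed for $\Gamma$ and $B$ to be genuine smooth (piecewise) submanifolds rather than only currents, and matching the global orientation on $\Gamma$ with the Grothendieck orientation on each local torus $T_P$. Both issues are handled by choosing $\delta$ generic and working in local trivializations of the $L_i$, but the orientation bookkeeping and the isotopy replacing the radii $\delta_i/|e_i^{(P)}(z)|_{h_i}$ by constants inside each $U_P$ require care, and the transversality of the real hypersurfaces $\{|s_i|_{h_i} = \delta_i\}$ is the kind of thing that is easy to assert but slightly delicate to justify without invoking Sard.
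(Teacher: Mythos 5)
The paper does not prove this statement; it cites Griffiths--Harris, Chapter~5, and uses the theorem as a black box. Your argument reproduces the standard proof from that reference essentially verbatim: build the global tube cycle $\Gamma$ from the sections $s_i$ cutting out the $D_i$, identify its components near each $P\in D_1\cap\cdots\cap D_n$ with the Leray--Grothendieck tori computing $\mathrm{Res}_P\,\omega$, and then exhibit the bounding $(n+1)$-chain $B=\{|s_1|=\delta_1,\dots,|s_{n-1}|=\delta_{n-1},\,|s_n|\ge\delta_n\}$ which avoids the polar divisor so that Stokes and the holomorphy of $\omega$ (an $(n,0)$-form on an $n$-fold, hence $d$-closed) kill the integral. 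The reasoning is sound, and the caveats you flag --- genericity of $\delta$ so that $\Gamma$ and $B$ are honest manifolds (use $|s_i|^2$ rather than $|s_i|$ to stay smooth, or work on $M\setminus D$), and matching orientations between the global $\Gamma$ and the local tori --- are precisely the points Griffiths--Harris handle by the same Sard/transversality device, so this is the intended proof rather than a genuinely different one.
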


Global Residue Theorem does not apply to generalized Bott manifolds, since  the ideals $f_i$'s are not necessarily non-degenerate, thus Grothendieck residue at point $(0, \cdots, 0)$ can  not be well defined.

\vskip.2cm

To overcome this obstacle, we introduce the concept ``iterated residue".

\vskip.2cm

\begin{definition}\label{iterated residue}
	Let $U$ be the ball $\{\textbf{z}\in \mathbb{C}^n:|\textbf{z}| <\epsilon \}$ and $f_1,\cdots,f_m\in \mathcal{O}(\overline{U})$ be functions holomorphic in a neighborhood of the closure $\overline{U}$ of $U$.
	Then ``\textbf{iterated residue}"  of a meromorphic $n$-form
	$$\omega=\frac{g(\textbf{z})dz_1\wedge \cdots\wedge dz_n}{f_1(\textbf{z})\cdots f_m(\textbf{z})} \ \ \ (g\in \mathcal{O}(\overline{U}))$$
	is defined   by
	$$\mathrm{Res}_0\left\{\cdots \mathrm{Res}_0\left\{\frac{g(\textbf{z})}{f_1(\textbf{z})\cdots f_m(\textbf{z})}  dz_1\right\}\cdots\right\} dz_n. $$
\end{definition}

\vskip.2cm

\begin{remark}
``Iterated residue" is well-defined since the residue of a meromorphic function in
one complex variable is always well-defined.
\vskip.1cm

``Iterated residue" is similar to iterated integral in multiple calculus, the order of $u_i$ coincides with their position in the generalized Bott tower, ``iterated residue" coincides with usual residue when $f_i(z)$'s are non-degenerate.
\vskip.1cm
	
``Iterated residue" admits a topological interpretation via characteristics of generalized Bott manifolds.
\end{remark}

\begin{theorem}\label{main}
	For any $F\in H^{n_1+\cdots+n_n}(B_n;\Z)$, we have
	$$\langle \ F ,\ [B_n]\ \rangle=\mathrm{Res}_0 \left\{ \mathrm{Res}_0 \cdots \left\{ \mathrm{Res}_0 \frac{F}{\prod_{j=1}^{n}f_j(u_1,\cdots,u_n)}d u_1 \right\} \cdots d u_{n-1} \right\} d u_n,$$
	where the order of $u_i$ coincides with their position in the generalized Bott tower.
\end{theorem}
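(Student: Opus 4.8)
The plan is to prove the iterated residue formula by induction on the height $n$ of the generalized Bott tower, using the projective bundle structure $p_n\colon B_n\to B_{n-1}$ at each step. The base case $n=1$ is $B_1=\CC P^{n_1}$, where $H^*(\CC P^{n_1})=\Z[u_1]/\langle f_1(u_1)\rangle$ with $f_1(u_1)=u_1^{n_1+1}$ (taking all Chern roots $x_{1j}=0$), and the classical one-variable residue identity $\langle g,[\CC P^{n_1}]\rangle=\mathrm{Res}_0\,\frac{g(u_1)}{u_1^{n_1+1}}\,du_1$ recalled in the introduction gives the claim. More generally at the base of the induction one needs the fact that for $B_1=\CC P^{n_1}$ with an arbitrary polynomial $f_1$ of degree $n_1+1$ having a simple root at $0$ and matching top degree, the residue at $0$ only sees the coefficient of $u_1^{n_1}$, which is exactly evaluation on the fundamental class.

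First I would set up the inductive step. Assume the formula holds for $B_{n-1}$. Write $B_n=\CC P(\xi_n\oplus\underline{\CC})$ with projection $p:=p_n$. The key structural input is the Leray--Hirsch / projective bundle theorem: $H^*(B_n)$ is a free $H^*(B_{n-1})$-module with basis $1,u_n,\dots,u_n^{n_n}$, and the Gysin homomorphism $p_*\colon H^*(B_n)\to H^{*-2n_n}(B_{n-1})$ satisfies $p_*(u_n^k)=0$ for $k<n_n$, $p_*(u_n^{n_n})=1$, together with the projection formula $p_*(p^*\alpha\cdot\beta)=\alpha\cdot p_*(\beta)$. Equivalently, $p_*$ is the "coefficient extraction" dual to the top power of $u_n$ relative to the defining relation $f_n(u_1,\dots,u_n)=u_n\prod_{j=1}^{n_n}(u_n+x_{nj})=0$, which expresses $u_n^{n_n+1}$ as a combination of lower powers with coefficients the elementary symmetric functions of the $x_{nj}$ (pulled back from $B_{n-1}$). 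The point to verify carefully is that for any polynomial representative $F$ of a class in $H^*(B_n)$, the innermost operation $\mathrm{Res}_0\{\,F/f_n\;du_n\,\}$ — a one-variable residue in $u_n$ with $u_1,\dots,u_{n-1}$ treated as parameters — produces exactly a polynomial representative of $p_*(F)\in H^*(B_{n-1})$. This is a one-variable computation: $1/f_n(u_n) = 1/\big(u_n\prod_j(u_n+x_{nj})\big)$ expands as a Laurent series in $u_n$ over the ring $H^*(B_{n-1})[[u_n]]$ (the $x_{nj}$ are nilpotent, so $\prod_j(u_n+x_{nj})^{-1}$ is a genuine power series with invertible constant term up to the nilpotent corrections), and $\mathrm{Res}_0$ picks the $u_n^{-1}$-coefficient, which one checks agrees term-by-term with the Gysin pushforward read off from the relation $f_n=0$.

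Then the induction closes: by the projective bundle computation just described,
$$\mathrm{Res}_0\!\left\{\frac{F}{f_1\cdots f_n}\,du_n\right\}=\frac{p_*(F)}{f_1(u_1,\dots,u_{n-1})\cdots f_{n-1}(u_1,\dots,u_{n-1})}$$
as meromorphic forms in $u_1,\dots,u_{n-1}$ — here one uses that $f_1,\dots,f_{n-1}$ do not involve $u_n$ so they pass through the $u_n$-residue untouched, and that $f_n$ is precisely the object producing the Gysin map. Applying the remaining $n-1$ iterated residues and the inductive hypothesis to $B_{n-1}$ gives $\langle p_*(F),[B_{n-1}]\rangle$, which equals $\langle F,[B_n]\rangle$ since $p_*$ is dual to $p^*$ on fundamental classes and $p_*$ of a top class on $B_n$ lands in the top of $B_{n-1}$. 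Finally I would remark that the answer is independent of the polynomial representative $F$ chosen: any element of the ideal $\langle f_1,\dots,f_n\rangle$ is killed, since $f_j\cdot(\text{anything})$ has vanishing $u_j$-residue at the stage where $u_j$ is integrated (the numerator then has no pole in $u_j$ at $0$ from the $f_j$ factor), so well-definedness is automatic.

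The main obstacle I anticipate is the careful bookkeeping in the inductive step: showing that the one-variable $\mathrm{Res}_0$ in $u_n$, carried out over the coefficient ring $H^*(B_{n-1})$ where the $x_{nj}$ are nilpotent rather than genuine numbers, really reproduces the Gysin pushforward $p_*$ — i.e. matching the Laurent expansion of $1/f_n$ against the recursion coming from $f_n=0$ — and checking that this is compatible with taking polynomial (not just class) representatives so that the outer residues can then be applied. Orientation/sign conventions for $[B_n]$ versus the iterated contour orientation, and the ordering of the $u_i$ matching the tower, also need a consistent choice but should cause no real difficulty.
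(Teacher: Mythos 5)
Your inductive strategy via the projective bundle theorem is natural, but the central step is wrong in two linked ways, and this makes the argument break.

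First, you have reversed the order of the iterated residue. The theorem states
$$\mathrm{Res}_0 \left\{ \cdots \left\{ \mathrm{Res}_0 \frac{F}{\prod_j f_j}\, d u_1 \right\} \cdots \right\} d u_n,$$
so the \emph{innermost} residue is taken with respect to $u_1$ (the bottom of the tower), and the $u_n$-residue comes \emph{last}. You instead take the $u_n$-residue first. For degenerate $f_j$ the order genuinely matters; that is the entire point of introducing the iterated residue. Consider the Hirzebruch surface $B_2=\CC P(\underline{\CC}\oplus\mathcal{O}(k))$, where $f_1=u_1^2$, $f_2=u_2(u_2+ku_1)$, and take $F=u_2^2$. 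In $H^4(B_2)$ one has $u_2^2=-ku_1u_2$, so $\langle F,[B_2]\rangle=-k$. The order in the theorem gives
$$\mathrm{Res}_0^{u_1}\frac{u_2^2}{u_1^2 u_2(u_2+ku_1)}\,du_1=\mathrm{Res}_0^{u_1}\frac{1}{u_1^2}\Big(1-\tfrac{ku_1}{u_2}+\cdots\Big)\,du_1=\frac{-k}{u_2},$$
and then $\mathrm{Res}_0^{u_2}\frac{-k}{u_2}\,du_2=-k$, which is correct. Your order gives
$$\mathrm{Res}_0^{u_2}\frac{u_2}{u_1^2(u_2+ku_1)}\,du_2=0$$
(the integrand is holomorphic in $u_2$ at $0$ once $u_1\neq0$), and then the $u_1$-residue of $0$ is $0\neq -k$.

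Second, and independently of the order, the identity you rely on,
$$\mathrm{Res}_0\!\left\{\frac{F}{f_1\cdots f_n}\,du_n\right\}=\frac{p_*(F)}{f_1\cdots f_{n-1}},$$
is false as a statement about the one-variable residue in $u_n$ with $u_1,\dots,u_{n-1}$ as (generic nonzero) parameters. With $x_{nj}\neq 0$ the factor $f_n=u_n\prod_j(u_n+x_{nj})$ has a \emph{simple} zero at $u_n=0$, so this residue extracts the constant term $F|_{u_n=0}$ (divided by $\prod_j x_{nj}$), i.e.\ the coefficient $\alpha_0$ in $F=\sum_k p^*(\alpha_k)u_n^k$, whereas $p_*(F)=\alpha_{n_n}$ is the coefficient of $u_n^{n_n}$. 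The Hirzebruch example above shows the mismatch explicitly: $p_*(u_2^2)=-ku_1$, but the $u_2$-residue is $0$. Your remark that ``$\prod_j(u_n+x_{nj})^{-1}$ is a genuine power series with invertible constant term'' is also not correct: the constant coefficient in $u_n$ is $\prod_j x_{nj}$, which is nilpotent in $H^*(B_{n-1})$, hence not invertible, so this inverse is not a power series in $u_n$ at all. If you instead pass to the formal (nilpotent) Laurent manipulation you have in mind, you are computing a different object from the analytic iterated residue that Definition~\ref{iterated residue} actually uses, and the equality of the two would itself need to be proven.

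For comparison, the paper's proof is not by induction on the tower height at all: it observes that both sides are $\Z$-linear on $H^{\mathrm{top}}(B_n;\Z)\cong\Z\langle u_1^{n_1}\cdots u_n^{n_n}\rangle$, checks the generator $u_1^{n_1}\cdots u_n^{n_n}$ by computing the iterated residue directly in the prescribed order (where the $u_1$-residue uses that $f_1=u_1^{n_1+1}$, after which each $x_{ij}$ restricts to $0$ as the earlier $u$'s are set to $0$), and then shows that any element of the ideal $\langle f_1,\dots,f_n\rangle$ has vanishing iterated residue. Your well-definedness remark at the end of your proposal is in the right spirit (and close to what the paper does for the ideal), but the main inductive claim as stated does not hold.
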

\begin{proof}
	We see that  $\langle F, [B_n]\rangle$ and $$\mathrm{Res}_0 \left\{ \mathrm{Res}_0 \cdots \left\{ \mathrm{Res}_0 \frac{F}{\prod_{j=1}^{n}f_j(u_1,\cdots,u_n)}d u_1 \right\} \cdots d u_{n-1} \right\} d u_n$$ are both linear forms, and  $H^{n_1+\cdots+n_n}(B_n;\Z)\cong \Z \langle u_1^{n_1}u_2^{n_2}\cdots u_n^{n_n} \rangle$.

\vskip .2cm 
	First let us look at the case $F=u_1^{n_1}u_2^{n_2}\cdots u_n^{n_n}$. 
	Obviously, $\langle u_1^{n_1}u_2^{n_2}\cdots u_n^{n_n}, [B_n]\rangle=1$, while
	since $u_1^{n_1+1}=0$ and
	$$ \mathrm{Res}_0\frac{ u_1^{n_1}u_2^{n_2}\cdots u_n^{n_n}}{\prod_{j=1}^{n}f_j(u_1,\cdots,u_n)}du_1
	=  \frac{u_2^{n_2}\cdots u_n^{n_n}}{\prod_{j=2}^{n}f_j(u_2,\cdots,u_n)}$$
so	
\begin{align*}
		& \mathrm{Res}_0 \left\{ \mathrm{Res}_0 \cdots \left\{ \mathrm{Res}_0 \frac{ u_1^{n_1}u_2^{n_2}\cdots u_n^{n_n}}{\prod_{j=1}^{n}f_j(u_1,\cdots,u_n)}d u_1 \right\} \cdots d u_{n-1} \right\} d u_n \\
		= & \mathrm{Res}_0 \left\{  \cdots \left\{ \mathrm{Res}_0 \frac{ u_2^{n_2}\cdots u_n^{n_n}}{\prod_{j=2}^{n}f_j(u_2,\cdots,u_n)}d u_2 \right\} \cdots  \right\} d u_n.\\
	\end{align*}
Continuing this procedure for $du_2, ..., du_n$, we obtain that
	\begin{align*}
		& \mathrm{Res}_0 \left\{ \mathrm{Res}_0 \cdots \left\{ \mathrm{Res}_0 \frac{ u_1^{n_1}u_2^{n_2}\cdots u_n^{n_n}}{\prod_{j=1}^{n}f_j(u_1,\cdots,u_n)}d u_1 \right\} \cdots d u_{n-1} \right\} d u_n =1
	\end{align*}
as desired. 
	\vskip.2cm
In general, $F$ may be a cohomology class  different from  $u_1^{n_1}u_2^{n_2}\cdots u_n^{n_n}$ in $H^{n_1+\cdots+n_n}(B_n;\Z)$. More precisely,  $F$ will be a multiple $a_F$ of $u_1^{n_1}u_2^{n_2}\cdots u_n^{n_n}$ by moduloing the ideal $f_i$'s, i.e.
$$F=a_F\cdot u_1^{n_1}u_2^{n_2}\cdots u_n^{n_n}+ \sum_{j=1}^{n}f_j\cdot g_j,$$
where $\langle f_j\rangle$ is an ideal in $H^{n_1+\cdots+n_n}(B_n;\Z)$, and $g_j$ is a polynomial of degree $n_1+\cdots+n_n-n_j$.

\vskip.2cm
We can compute directly that
$$ \mathrm{Res}_0 \frac{f_1\cdot g_1}{\prod_{j=1}^{n}f_j(u_1,\cdots,u_n)}d u_1 = \mathrm{Res}_0 \frac{ g_1}{\prod_{j=2}^{n}f_j(u_1,\cdots,u_n)}d u_1 =0$$
and 
\begin{align*}
   & \mathrm{Res}_0  \left\{ \mathrm{Res}_0 \frac{f_2\cdot g_2}{\prod_{j=1}^{n}f_j(u_1,\cdots,u_n)}d u_1 \right\}  d u_2  \\
  = &  \mathrm{Res}_0  \left\{ \mathrm{Res}_0 \frac{ g_2}{u_1^{n+1}\cdot\prod_{j=3}^{n}f_j(u_1,\cdots,u_n)}d u_1 \right\}  d u_2 \\
  = &  \mathrm{Res}_0 \frac{ g_2}{h_2(u_2,\cdots,u_n)}d u_2=0
\end{align*}
where $h_2(u_2,\cdots,u_n)$ has no pole at $u_2=0$.
 In the same argument way as above, we can conclude that $\mathrm{Res}_0 \left\{ \mathrm{Res}_0 \cdots \left\{ \mathrm{Res}_0 \frac{ f_k\cdot g_k}{\prod_{j=1}^{n}f_j(u_1,\cdots,u_n)}d u_1 \right\} \cdots d u_{n-1} \right\} d u_n =0,\ k=1,\cdots n.$
 Thus $$\langle \ F ,\ [B_n]\ \rangle=\mathrm{Res}_0 \left\{ \mathrm{Res}_0 \cdots \left\{ \mathrm{Res}_0 \frac{F}{\prod_{j=1}^{n}f_j(u_1,\cdots,u_n)}d u_1 \right\} \cdots d u_{n-1} \right\} d u_n=a_F,$$
 which implies our algorithm is well defined.
\end{proof}
\vskip.3cm
For a generalized Bott manifold $B_n$
$$B_n \overset{p_n}{\longrightarrow} B_{n-1}\overset{p_{n-1}}{\longrightarrow} \cdots \longrightarrow B_2\overset{p_2}{\longrightarrow} B_1\longrightarrow \text{pt}. $$

Let $-u_k$ be the first Chern class of the tautological line bundle over  $B_k=\CC P(\xi_k\oplus \underline{\CC})$ over $B_{k-1}$, the Chern roots of $\xi_k$ be $\{ x_{k1},x_{k2},\cdots,x_{kn_k} \}$. Denote $X$ be the submanifold of $B_n$ Poincar\'e dual to $a_1u_1+\cdots+a_nu_n \in H^2(B_n;\Z)$.

\vskip .2cm

Applying Theorem \ref{main} to formula (\ref{formula}), we conclude  that

\vskip .2cm
\begin{theorem}\label{main1}
	For any genus $\psi$ with the characteristic power series $Q(x)=x/f(x)$, we have
	\begin{align}\label{genus} &\psi(X)
\\
=& \ \mathrm{Res}_0 \left\{ \cdots \left\{ \mathrm{Res}_0 \frac{f(a_1u_1+\cdots+a_nu_n)}{f(u_1)^{n_1+1}}\cdot \prod_{i=1}^{n}\frac{1}{f(u_i)}\cdot\prod_{j=1}^{n_i} \frac{1}{f(u_i+x_{ij})} d u_1 \right\} \cdots  \right\} d u_n. \nonumber
\end{align}
	
\end{theorem}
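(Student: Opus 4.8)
The plan is to obtain the theorem as an immediate consequence of Theorem~\ref{main} applied to the characteristic‑number formula~(\ref{formula}). Recall that~(\ref{formula}) rewrites $\psi(X)$ as the pairing $\big\langle F,[B_n]\big\rangle$, where
\[
F=\Big(\frac{u_1}{f(u_1)}\Big)^{n_1+1}\, f\big(a_1u_1+\cdots+a_nu_n\big)\,\prod_{i=1}^{n}\frac{u_i}{f(u_i)}\prod_{j=1}^{n_i}\frac{u_i+x_{ij}}{f(u_i+x_{ij})}.
\]
The factors here are the values of $Q(x)=x/f(x)$ on the Chern roots of $TB_n$ (read off from the iterated projective bundle structure, with the leading power coming from $TB_1=T\mathbb{C}P^{n_1}$) and on $c_1(\nu)=i^*(a_1u_1+\cdots+a_nu_n)$, with the identity $x/Q(x)=f(x)$ turning the normal‑bundle contribution into $f(a_1u_1+\cdots+a_nu_n)$. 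Since $Q(0)=1$, each $u_i/f(u_i)=Q(u_i)$ and $f(a_1u_1+\cdots+a_nu_n)$ is an honest element of $H^*(B_n;\mathbb{C})$ after reduction modulo $\langle f_1,\dots,f_n\rangle$, so $F$ is a well‑defined top‑degree class.

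Next I would apply Theorem~\ref{main} to $F$, using for the meromorphic $n$‑form the natural rational expression in $u_1,\dots,u_n$ written above (with the $x_{ij}$ and $a_i$ regarded as parameters). Theorem~\ref{main} then gives
\[
\psi(X)=\big\langle F,[B_n]\big\rangle=\mathrm{Res}_0\Big\{\cdots\Big\{\mathrm{Res}_0\,\frac{F}{\prod_{l=1}^{n}f_l(u_1,\dots,u_n)}\,du_1\Big\}\cdots\Big\}\,du_n ,
\]
with $f_l=u_l\prod_{k=1}^{n_l}(u_l+x_{lk})$. All that then remains is to simplify $F/\prod_l f_l$: the numerators $u_i$ and $u_i+x_{ij}$ of the Hirzebruch factors $u_i/f(u_i)$ and $(u_i+x_{ij})/f(u_i+x_{ij})$, together with the numerator of the leading factor $\big(u_1/f(u_1)\big)^{n_1+1}$, are precisely the monomials making up $\prod_l f_l=\prod_l u_l\prod_k(u_l+x_{lk})$ (recalling that $\xi_1$ is trivial, so $f_1=u_1^{\,n_1+1}$); these cancel, while the twist $f(a_1u_1+\cdots+a_nu_n)$ is carried along untouched. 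The integrand therefore collapses to
\[
\frac{f(a_1u_1+\cdots+a_nu_n)}{f(u_1)^{n_1+1}}\cdot\prod_{i=1}^{n}\frac{1}{f(u_i)}\cdot\prod_{j=1}^{n_i}\frac{1}{f(u_i+x_{ij})},
\]
which is exactly the integrand of~(\ref{genus}); this proves the theorem.

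The point needing care --- and the only real friction I foresee --- is that the iterated residue must be computed from an honest (rational or formal power‑series) representative of the class $F$, and one must verify that its value does not depend on this choice. That is essentially the computation already carried out in the proof of Theorem~\ref{main}, where any summand of the form $f_k\cdot g$ is shown to contribute $0$ to the iterated residue, because the factor $f_k$ kills a pole in the denominator and so turns one of the successive one‑variable residues into the residue of a function holomorphic at the origin. Consequently, adding to $F$ any element of $\langle f_1,\dots,f_n\rangle$ leaves the right‑hand side unchanged, so the rational expression above represents the class $F$ just as well as its polynomial truncation; one may also note that $f$ has a simple zero at the origin, so $1/f(u_1)^{n_1+1}$ and the remaining reciprocals carry exactly the pole orders the successive residues expect, whether one reads everything analytically or as coefficient extraction of formal Laurent series. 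Apart from this bookkeeping, the entire content of the theorem is already packaged in Theorem~\ref{main} and formula~(\ref{formula}).
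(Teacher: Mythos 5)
Your proposal is correct and follows essentially the same route as the paper, which simply applies Theorem~\ref{main} to formula~(\ref{formula}) and cancels the numerators $u_i$ and $u_i+x_{ij}$ against $\prod_l f_l$. The only thing you add is the explicit bookkeeping of that cancellation and the remark that adding elements of $\langle f_1,\dots,f_n\rangle$ does not change the iterated residue, both of which are implicit in the paper's treatment.
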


Formula (\ref{genus}) kills the  ideals in $H^*(B_n; \mathbb{Z})$ and only $f$ is left. If power series $f$ admits good property, it is possible to give explicit formula of $\psi(X)$.
\vskip.2cm

\section{Borisov-Gunnellls Toric forms: theta functions identities and vanishing results}\label{toric forms}
Toric variety is a very important object in both algebraic geometry and geometric topology. Borisov and Libgober\cite{BL}  gave the explicit formula of elliptic genus of  a toric variety by its combinatorial data, and showed that elliptic genera of a Calabi-Yau hypersurface in a toric variety and its mirror coincide up to sign.
\vskip.2cm
In \cite{BG}, Borisov and Gunnells defined algebraic \emph{toric form} which is motivated from the expression of normalized elliptic genus of toric variety\cite{BL}.
\vskip.2cm
Let $N\in \mathbb{R}^r$ be a lattice, $M$ be its dual lattice. For a complete rational polyhedral fan $\Sigma\subset N\otimes \mathbb{R}$. A \emph{degree function} $\deg : N\longrightarrow \mathbb{C}$ is a piecewise linear function on the cones of $\Sigma$.
\vskip.2cm
For every cone $C\in \Sigma$, they defined a map
$$f_C: \mathbb{H}\times M_\mathbb{C}\longrightarrow \mathbb{C}$$
as follows. Write $q=e^{2\pi i \tau}$, $\tau\in \mathbb{H}$, the upper halfplane. If $m\in M_\mathbb{C}$ satisfies
$$m\cdot (C\setminus \{0\})>0,$$
for all $\tau$ with sufficiently large imaginary part, they set
$$f_C(q,m)\coloneqq \sum_{n\in C \cap N}  q^{m\cdot n}e^{2\pi i \deg(n)}.$$

\vskip.2cm
The  \emph{toric form} associated to $(N,\deg)$ is the function $f_{N,\deg}: \mathbb{H}\longrightarrow \mathbb{C}$  defined by
$$f_{N,\deg}(q)\coloneqq \sum_{m\in M} (   \sum_{C\in \Sigma}(-1)^{codim \ C} a.c.(  \sum_{n\in C} q^{m\cdot n}e^{2\pi i \deg(n)} )  ),$$
here  ``$a.c.$'' denotes analytic continuation of $f_C$.

\vskip.2cm

The definition of $f_{N,\deg}$ is well-defined. More precisely, there exists $\epsilon>0$ such that the sum over $M$ converges absolutely and uniformly for all $|q|<\epsilon$.

\vskip.2cm
\begin{remark}
	Suppose that $\deg(d)=1/2$ for all generators $d$ of one-dimensional cones of $\Sigma$, and toric variety $X$ associated to $\Sigma$ is nonsingular. Then the function $f_{N,\deg}(q)$ is the normalized elliptic genus of $X$. This example is the main motivation of toric forms \cite{BL}.
\end{remark}

\vskip.2cm
Furthermore they proved that the toric forms are modular forms under certain conditions on $\deg$.

\vskip.2cm
\begin{theorem} [Borisov, Gunnells]
	Suppose $\deg$ function takes values in $\frac{1}{l}\mathbb{Z}$, and $\deg$ is not integral valued on the primitive generator of any 1-cone of $\Sigma$. Then toric form  $f_{N,\deg}(q)$ is holomorphic modular form of weight $r$ for the congruence subgroup $\Gamma_1(l)$.
\end{theorem}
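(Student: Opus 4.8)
The plan is to express $f_{N,\deg}$ as a finite combination of products of weight-one Eisenstein series of level $l$ with root-of-unity characters, and then invoke their classical modularity; the non-integrality hypothesis is exactly what will force holomorphy. First I would pass to a simplicial fan: subdivide $\Sigma$ to a simplicial complete fan $\Sigma'$ using no new rays (such a refinement always exists) and keep $\deg$, which remains piecewise linear on $\Sigma'$ with the \emph{same} set of rays --- hence the same root-of-unity data $\zeta_\rho:=e^{2\pi i\deg(v_\rho)}$ and the same non-integrality $\zeta_\rho\ne1$. Since $\sigma\mapsto\mathrm{a.c.}\big(\sum_{n\in\sigma\cap N}q^{\,m\cdot n}e^{2\pi i\deg(n)}\big)$ is a valuation on rational cones (Lawrence--Brion--Varchenko), the alternating sum $\sum_{C}(-1)^{\operatorname{codim}C}(\cdots)$ is additive under subdivision, so $f_{N,\Sigma',\deg}=f_{N,\deg}$ and we may assume $\Sigma$ simplicial. (For smooth $\Sigma$ one could alternatively read $f_{N,\deg}$ off Hirzebruch--Riemann--Roch as a twisted elliptic genus of $X_\Sigma$, in the spirit of the remark above; I proceed combinatorially.)

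For a simplicial maximal cone $C$ with primitive generators $v_1,\dots,v_r$, decompose $C\cap N=\bigsqcup_{p\in\Pi_C}\big(p+\textstyle\sum_i\mathbb Z_{\ge0}v_i\big)$ with $\Pi_C$ the finite set of lattice points in the fundamental parallelepiped; then for $m$ with $m\cdot v_i>0$,
\[
\mathrm{a.c.}\Big(\sum_{n\in C\cap N}q^{\,m\cdot n}e^{2\pi i\deg(n)}\Big)=\Big(\sum_{p\in\Pi_C}q^{\,m\cdot p}e^{2\pi i\deg(p)}\Big)\prod_{i=1}^{r}\frac{1}{1-q^{\,m\cdot v_i}\zeta_i},
\]
and the same rational function gives the analytic continuation for all $m$; the parallelepiped prefactor is a finite sum whose exponentials are $l$-th roots of unity since $\deg$ is $\tfrac1l\mathbb Z$-valued on $N$. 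Next I would reorganize $\sum_{C\in\Sigma}(-1)^{\operatorname{codim}C}(\cdots)$ over the face poset of the complete fan by M\"obius inversion --- passing from closed cones to relative interiors --- so that after summing over $m\in M$ the whole expression collapses to a finite sum over maximal cones of products of one-variable lattice sums in the dual coordinates $m=\sum_i k_i v_i^{\ast}$. Up to the regularization already built into the definition of $f_{N,\deg}$, each factor is a Hecke-regularized weight-one Eisenstein series with a root-of-unity character, $E_1(\tau;\zeta)\sim\sum_{k}{}'\tfrac{q^{k}\zeta}{1-q^{k}\zeta}$ with $\zeta=\zeta_i$, together with its $\tau$-derivatives $E_j(\tau;\zeta)$ for $j\ge2$ (these converge outright); the parallelepiped contributions only reshuffle the $l$-torsion characteristics. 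So $f_{N,\deg}$ is a finite $\mathbb C$-linear combination of $r$-fold products of such series --- the same shape as the $\theta'/\theta$-type expressions of Propositions~\ref{second}--\ref{Hir}, since up to an elementary factor $E_1(\tau;e^{2\pi ia})$ is $\theta'(-a,\tau)/\theta(-a,\tau)$.

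Granting this, the modularity is immediate. For $\zeta=e^{2\pi ia/l}$ the function $E_j(\tau;\zeta)$ is the classical weight-$j$ level-$l$ Eisenstein series with characteristic $(0,a)\in(\tfrac1l\mathbb Z)^2/\mathbb Z^2$, and under $\gamma=\left(\begin{smallmatrix}a'&b'\\ c'&d'\end{smallmatrix}\right)\in SL_2(\mathbb Z)$ the characteristic is sent to $(0,a)\gamma\equiv(\tfrac{ac'}{l},\tfrac{ad'}{l})\bmod\mathbb Z^2$, which for $\gamma\in\Gamma_1(l)$ --- where $c'\equiv0$, $d'\equiv1\pmod l$ --- is again $(0,a)$; hence $E_j\big(\tfrac{a'\tau+b'}{c'\tau+d'};\zeta\big)=(c'\tau+d')^{j}E_j(\tau;\zeta)$. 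Multiplying $r$ weight-one factors per maximal cone and summing yields $f_{N,\deg}\big(\tfrac{a'\tau+b'}{c'\tau+d'}\big)=(c'\tau+d')^{r}f_{N,\deg}(\tau)$ for every $\gamma\in\Gamma_1(l)$ (the $T$-transformation being trivial, as $q\mapsto q$). Finally $\zeta_\rho\ne1$ for every ray kills the would-be pole of $(1-q^{k}\zeta_\rho)^{-1}$ at $k=0$, so each $E_j(\tau;\zeta_\rho)$, and therefore $f_{N,\deg}$, is holomorphic on $\mathbb H$ and finite at the cusps: $f_{N,\deg}$ is a holomorphic modular form of weight $r$ for $\Gamma_1(l)$.

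The main obstacle is the second step, not the modular transformation. The per--maximal--cone lattice sums diverge on their own (the tail of $\tfrac{q^k\zeta}{1-q^k\zeta}$ as $k\to-\infty$ tends to $-1$, not $0$), so one must show that the regularization defining $f_{N,\deg}$ --- the analytically continued alternating sum over \emph{all} cones of a \emph{complete} fan --- is precisely the Hecke regularization of the Eisenstein series, i.e.\ that these anomalous boundary tails cancel globally. Carrying this out rigorously, which amounts to upgrading the Lawrence--Brion--Varchenko exact sequence for cone generating functions to the present elliptic/Jacobi-type setting and pushing it through the summation over $M$, is the heart of the matter; once in place, the weight count ``$r=$ number of generators of a maximal cone'' and the $\Gamma_1(l)$-invariance follow formally from the classical theory of Eisenstein series with characteristics.
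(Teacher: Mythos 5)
The paper does not prove this statement; it is imported verbatim from Borisov--Gunnells~\cite{BG} and used as a black box (together with Theorem~\ref{BG}, which supplies the Hirzebruch--Riemann--Roch form the paper actually computes with). So there is no ``paper proof'' to match yours against, and the comparison can only be with the argument in~\cite{BG}.

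Your sketch is consistent in spirit with the combinatorial route in~\cite{BG}: reduce to simplicial cones, expand each simplicial cone sum by the fundamental parallelepiped, recognize the resulting lattice sums as (regularized) Eisenstein series with level-$l$ characteristics, and read off weight $r$ and $\Gamma_1(l)$-invariance from the transformation of those characteristics, with $\deg(v_\rho)\notin\mathbb Z$ removing the constant-term pole. You also correctly isolate the genuine difficulty --- that the single-cone sums are individually divergent in the $k\to-\infty$ direction and one has to show the alternating sum over \emph{all} cones of the \emph{complete} fan reproduces the Hecke regularization, so that the boundary tails cancel globally. Two gaps should not be glossed over, though. First, the claim that $f_{N,\deg}$ is unchanged under a simplicial refinement $\Sigma'$ with the same rays is asserted via ``valuations are additive,'' but the alternating sum $\sum_{C}(-1)^{\operatorname{codim}C}$ changes its index set under refinement; the right statement is a Brianchon--Gram/Euler-characteristic identity on the face poset of a complete fan (every point of $N$ is weighted~$1$ in the formal alternating sum), and that identity is what makes refinement invariance hold \emph{after} analytic continuation --- it deserves a proof, not a citation by analogy. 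Second, the passage ``reorganize by M\"obius inversion $\Rightarrow$ finite sum of products of $E_1(\tau;\zeta)$ and its derivatives'' is where all the analysis lives; until that regularization step is carried out (it is precisely Borisov--Gunnells' main technical lemma), the transformation law and holomorphy you derive apply to a candidate expression, not yet to $f_{N,\deg}$ as defined. In short: correct ingredients, correct identification of the hard step, but the argument as written is a road map rather than a proof.
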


Borisov and Gunnells gave a topological interpretation of toric forms by Hirzebruch-Riemann-Roch theorem. Let $\{ d_i\}$ be the set of primitive generator of  1-cone of $\Sigma$, $X$ be toric variety associated to $\Sigma$, and for each $d_i$, $D_i\subset X$ be the corresponding toric divisor. In the following, we abuse $D_i$ to mean either the divisor or its cohomology class. Recall that
$$\theta(z,\tau)=2 q^{1/8}\sin(\pi z)\prod_{j=1}^\infty [(1-q^j)(1-e^{2\pi i  z}q^j)(1-e^{-2\pi i  z}q^j)].$$

In the following paper, we abbreviate $\theta(z,\tau)$ as $\theta(z)$.

\vskip.2cm

\begin{theorem}[ Borisov, Gunnells]\label{BG}
	Assume that the toric variety $X$ is nonsingular, assign to  all primitive generators of  1-cones of $\Sigma$ complex numbers $\alpha_i \notin \mathbb{Z}$. Then
	$$f_{N,\deg}(q)=\int_X \prod_i \frac{(D_i/2\pi i)\theta(D_i/2\pi i-\alpha_i,\tau)\theta'(0,\tau)}{\theta(D_i/2\pi i,\tau)\theta(-\alpha_i,\tau)},$$
where \  $q=\exp^{2\pi i\tau }$.
\end{theorem}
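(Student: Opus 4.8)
The plan is to work from the right-hand side back to the combinatorial definition of $f_{N,\deg}(q)$, using Hirzebruch--Riemann--Roch together with the combinatorics of $T$-equivariant coherent sheaves on $X$. The case in which all $\alpha_i$ equal $\frac{1}{2}$ (equivalently $\deg\equiv\frac{1}{2}$) is precisely the formula of Borisov and Libgober \cite{BL} identifying the normalized elliptic genus of a smooth projective toric variety with the fan sum, so a large part of the plan is simply to verify that their fan-sum manipulation is insensitive to replacing the common elliptic parameter by the ray-dependent constants $\alpha_i$, and that $\deg$ is recovered as the piecewise-linear function on $\Sigma$ extending $d_i\mapsto\alpha_i$ (using that every maximal cone of a smooth fan is unimodular, so this extension is well-defined).

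First I would expand the integrand with the product formula for $\theta$. Writing $x_i=D_i/2\pi i$ and $t_i=e^{2\pi i\alpha_i}$, a short manipulation of $\dfrac{x_i\,\theta(x_i-\alpha_i,\tau)\theta'(0,\tau)}{\theta(x_i,\tau)\theta(-\alpha_i,\tau)}$ shows that its $q^0$-coefficient is $\dfrac{D_i}{1-e^{-D_i}}\cdot\dfrac{1-t_ie^{-D_i}}{1-t_i}$, and more generally that the whole expression is a power series in $q$ whose coefficients are polynomials in the $D_i$ built from Todd-type factors, the twists $(1-t_ie^{-D_i})/(1-t_i)$, and the $q$-contributions of the three infinite products in $\theta$. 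On the smooth complete toric variety $X$ the generalized Euler sequence gives $\prod_i D_i/(1-e^{-D_i})=\mathrm{Td}(TX)$, and the $D_i$ then play the role of the Chern data of $TX$; consequently each $q$-coefficient of the integrand has the shape $\mathrm{Td}(TX)\cdot\mathrm{ch}(\mathcal F_k)$ for an explicit sheaf $\mathcal F_k$ assembled from the equivariant line bundles $\mathcal O(D_i)$ with coefficients that depend rationally on the $t_i$. Hirzebruch--Riemann--Roch then rewrites the right-hand side as $\sum_{k\ge 0}q^{k}\chi(X,\mathcal F_k)$, read $T$-equivariantly with the $t_i$ as equivariant parameters.

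Next I would compute these equivariant Euler characteristics combinatorially. For a $T$-equivariant coherent sheaf on $X_\Sigma$ one has the fan description of its cohomology, from which $\chi(X,\mathcal F)=\sum_{m\in M}\sum_{C\in\Sigma}(-1)^{\mathrm{codim}\,C}(\text{weight-}m\text{ local term of }\mathcal F\text{ at }C)$; equivalently one may localize at the torus fixed points (one per maximal cone $\sigma$), whose contributions $\prod_{d_i\in\sigma(1)}(1-q^{m\cdot d_i}t_i)^{-1}$, after reinserting the $\theta$-expansion and performing inclusion--exclusion over the faces of the $\sigma$'s, reassemble into the same alternating sum over all of $\Sigma$. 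Carrying this out for the $\mathcal F_k$ above and summing over $k$, the local term at a cone $C$ collapses to $\sum_{n\in C\cap N}q^{m\cdot n}e^{2\pi i\deg(n)}$ with $\deg(d_i)=\alpha_i$, so the expression becomes $\sum_{m\in M}\sum_{C\in\Sigma}(-1)^{\mathrm{codim}\,C}\,a.c.\big(\sum_{n\in C}q^{m\cdot n}e^{2\pi i\deg(n)}\big)=f_{N,\deg}(q)$, as claimed.

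The main obstacle is the non-integrality of the $\alpha_i$: the twisting "bundles" $\mathcal O(D_i)^{\otimes\alpha_i}$ do not exist, so both Hirzebruch--Riemann--Roch and the combinatorial Euler-characteristic formula must be run in the $T$-equivariant category with $t_i=e^{2\pi i\alpha_i}$ as formal parameters, and one must check that specialization to these values is compatible with the analytic continuations built into the definition of $f_{N,\deg}$. The second delicate point is the bookkeeping of convergence: the sum over $M$ converges only for $|q|$ small and only after analytically continuing each $f_C$, so the interchange of $\sum_m$, $\sum_C$, the $q$-expansion and the localization must be justified in that regime. This is the technical core shared with \cite{BL}; once it is in place, the identity follows by matching $q$-coefficients.
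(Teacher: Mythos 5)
This statement is quoted from Borisov and Gunnells \cite{BG} and is not proved in the present paper; the paper simply records it (as Theorem~\ref{BG}) and then uses it to compute toric forms via the iterated residue. There is therefore no internal proof to compare your attempt against, so the relevant question is whether your outline is a faithful reconstruction of the original argument.

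Your plan is sound and follows the Borisov--Gunnells/Borisov--Libgober strategy correctly. The $q^0$ computation checks out: with $x=D/2\pi i$ and $t=e^{2\pi i\alpha}$, the leading term of $x\,\theta(x-\alpha)\theta'(0)/\big(\theta(x)\theta(-\alpha)\big)$ is indeed $\tfrac{D}{1-e^{-D}}\cdot\tfrac{1-te^{-D}}{1-t}$, which is the Todd factor times the equivariant Chern character of $(\mathcal O-t\,\mathcal O(-D))/(1-t)$, so Hirzebruch--Riemann--Roch reads the integral as an equivariant Euler characteristic with $t_i=e^{2\pi i\alpha_i}$ as formal parameters. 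You also correctly flag the two genuinely delicate points: (i) since the $\alpha_i$ are non-integral, everything must be run $T$-equivariantly, and the piecewise-linear extension of $d_i\mapsto\alpha_i$ to $\deg$ uses smoothness of the fan; (ii) the sum over $M$ converges only after analytic continuation of each $f_C$ and for $|q|$ small, so the interchange of $\sum_m$, $\sum_C$, the $q$-expansion, and localization needs justification. The only weakness is that the central step --- showing that the higher $q$-coefficients again have the form $\mathrm{Td}(TX)\cdot\mathrm{ch}(\mathcal F_k)$ and that the inclusion--exclusion over faces of maximal cones reassembles the fixed-point local terms into $\sum_{n\in C\cap N}q^{m\cdot n}e^{2\pi i\deg(n)}$ --- is asserted rather than carried out; this is precisely where the bulk of the work in \cite{BG} and \cite{BL} lies. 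As a high-level reconstruction it is accurate, but it defers the technical core to the cited sources rather than supplying it.
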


\vskip.2cm
Our ``iterated residue" can be applied to calculate above Euler characteristic for some generalized Bott manifolds, thus getting interesting theta function identities regarding to toric forms.

\subsection{$\mathbb{C}P^2$ bundle over $\mathbb{C}P^2$}\label{classic examples}   \

Consider a $\mathbb{C}P^2$ bundle over $\mathbb{C}P^2$, its corresponding fan is spanned by five primitive vectors $\textbf{e}_1,\ \textbf{e}_2,\ \textbf{e}_3,\ \textbf{e}_4,\ -\textbf{e}_1-\textbf{e}_2+j\textbf{e}_3+k\textbf{e}_4,\ -\textbf{e}_3-\textbf{e}_4$ in $\mathbb{Z}^4$.

\vskip.2cm

Assume that $\deg$ takes $\frac{1}{2}$ on all generators $\textbf{e}_1,\ \textbf{e}_2,\ \textbf{e}_3,\ \textbf{e}_4,\ -\textbf{e}_1-\textbf{e}_2+j\textbf{e}_3+k\textbf{e}_4,\ -\textbf{e}_3-\textbf{e}_4$ in $\mathbb{Z}^4$. Then the toric form is
$$f_{N,\deg}(q)=\sum_{a,b,c,d\in \mathbb{Z}} \frac{2(1+q^{jc+kd})}{(1+q^a)(1+q^b)(1+q^{c})(1+q^{d})(1+q^{jc+kd-a-b})(1+q^{-c-d}) }.$$
On the other hand, by a direct calculation,
	\begin{align*}
		& f_{N,\deg}(q)\\
		= & \int_X \prod_{i=1}^6 \frac{(D_i/2\pi i)\theta(D_i/2\pi i-\alpha_i)\theta'(0)}{\theta(D_i/2\pi i)\theta(-\alpha_i)} \\
		=& \frac{\theta'(0)^6}{(2\pi i)^6  \theta^6(\frac{1}{2})}\textrm{Res}_0 \left\{ \textrm{Res}_0   \frac{\theta^3(\frac{u}{2\pi i}-\frac{1}{2}) \theta(\frac{v }{2\pi i}-\frac{1}{2}) \theta(\frac{v-ju}{2\pi i}-\frac{1}{2})\theta(\frac{v-ku}{2\pi i}-\frac{1}{2})}{\theta^3(\frac{u}{2\pi i})\theta(\frac{v}{2\pi i})\theta(\frac{v-ju}{2\pi i})\theta(\frac{v-ku}{2\pi i})} du\right\}dv\\
		=&  \frac{\theta'(0)^6}{(2\pi i)^4\theta^6(\frac{1}{2})}\textrm{Res}_0 \frac{\theta(v-\frac{1}{2})}{\theta(v)} \left\{ \textrm{Res}_0   \frac{\theta^3(u-\frac{1}{2}) \theta(v-ju-\frac{1}{2})\theta(v-ku-\frac{1}{2})}{\theta^3(u)\theta(v-ju)\theta(v-ku)} du\right\}dv\\
		=&\frac{\theta'(0)^3}{(2\pi i)^4 \theta^3(\frac{1}{2})}\textrm{Res}_0 \frac{\theta^3(v+\frac{1}{2})}{\theta^3(v)}\bigg\{ \frac{3\theta''(\frac{1}{2})}{2\theta(\frac{1}{2})}-\frac{\theta^{(3)}(0)}{2\theta'(0)}+
\frac{j^2+k^2}{2}\Big(\frac{\theta''(v+\frac{1}{2})}{\theta(v+\frac{1}{2})} -\frac{\theta''(v )}{\theta(v )} \Big) \\
		&+jk\Big(\frac{\theta'^2(v+\frac{1}{2})}{\theta^2(v+\frac{1}{2})} -\frac{\theta'^2(v )}{\theta^2(v )} \Big)+(j+k)^2\frac{\theta'^2(v )}{\theta^2(v )}-(j+k)^2\frac{\theta' (v )\theta' (v+\frac{1}{2})}{\theta (v )\theta  (v+\frac{1}{2})}   \bigg\}   dv\\
		=& \frac{1}{(2\pi i)^4}\bigg\{\Big(\frac{3\theta''(\frac{1}{2})}{2\theta(\frac{1}{2})}-
\frac{\theta^{(3)}(0)}{2\theta'(0)}\Big)^2+(j^2+k^2-jk)\Big(\frac{\theta''^2(\frac{1}{2})}{4\theta^2(\frac{1}{2})}
		+\frac{5\theta^{(4)}(\frac{1}{2})}{24\theta(\frac{1}{2})}\\
	&-\frac{7\theta^{(3)}(0)\theta''(\frac{1}{2})}{12\theta'(0)\theta(\frac{1}{2})}-
\frac{\theta^{(5)}(0)}{24\theta'(0)}+\frac{(\theta^{(3)}(0))^2}{6\theta'^2(0)}\Big)\bigg  \}
\end{align*}
so we have that

\vskip .2cm
\begin{proposition}\label{second}
	There exists the following theta function identity
		\begin{align*}
			&\sum_{a,b,c,d\in \mathbb{Z}} \frac{2(1+q^{jc+kd})}{(1+q^a)(1+q^b)(1+q^{c})(1+q^{d})(1+q^{jc+kd-a-b})(1+q^{-c-d}) }\\
			=& \frac{1}{(2\pi i)^4}\Big\{\Big(\frac{3\theta''(\frac{1}{2})}{2\theta(\frac{1}{2})}-
\frac{\theta^{(3)}(0)}{2\theta'(0)}\Big)^2+(j^2+k^2-jk)\Big(\frac{\theta''^2(\frac{1}{2})}{4\theta^2(\frac{1}{2})}
			+\frac{5\theta^{(4)}(\frac{1}{2})}{24\theta(\frac{1}{2})}-\frac{7\theta^{(3)}(0)\theta''(\frac{1}{2})}{12\theta'(0)\theta(\frac{1}{2})}\\
			&-\frac{\theta^{(5)}(0)}{24\theta'(0)}+\frac{(\theta^{(3)}(0))^2}{6\theta'^2(0)}\Big)  \Big\}.
	\end{align*}
\end{proposition}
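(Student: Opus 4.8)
The plan is to realize both sides of the claimed identity as two different expressions for the same topological quantity, namely the Borisov--Gunnells toric form $f_{N,\deg}(q)$ of the $\mathbb{C}P^2$-bundle over $\mathbb{C}P^2$ determined by the parameters $j,k\in\mathbb Z$. On the combinatorial side, I would simply unwind the definition of $f_{N,\deg}$ using the fan spanned by $\mathbf e_1,\mathbf e_2,\mathbf e_3,\mathbf e_4,-\mathbf e_1-\mathbf e_2+j\mathbf e_3+k\mathbf e_4,-\mathbf e_3-\mathbf e_4$ with $\deg\equiv\frac12$ on all ray generators. Summing the geometric series attached to each maximal cone and keeping track of signs $(-1)^{\mathrm{codim}\,C}$ collapses everything to the quadruple sum on the left-hand side; this is the same bookkeeping Borisov--Libgober performed for $\mathbb C P^2$, and the appearance of the six factors $(1+q^a),(1+q^b),(1+q^c),(1+q^d),(1+q^{jc+kd-a-b}),(1+q^{-c-d})$ corresponds exactly to the six rays, while the numerator $2(1+q^{jc+kd})$ is the usual inclusion--exclusion correction.

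On the geometric side, I would invoke Theorem~\ref{BG} to write $f_{N,\deg}(q)=\int_X\prod_{i=1}^6 \frac{(D_i/2\pi i)\theta(D_i/2\pi i-\alpha_i)\theta'(0)}{\theta(D_i/2\pi i)\theta(-\alpha_i)}$ with all $\alpha_i=\frac12$, and then identify the toric divisor classes in terms of the two cohomology generators $u=u_1,v=u_2$ of this generalized Bott manifold: the two $\mathbb C P^2$-bundle structure gives $D$-classes that are (up to the defining relations) $u,u,u,v,v-ju$ or similar, $v-ku$, so that the integrand becomes $\frac{\theta^3(\tfrac{u}{2\pi i}-\tfrac12)\theta(\tfrac{v}{2\pi i}-\tfrac12)\theta(\tfrac{v-ju}{2\pi i}-\tfrac12)\theta(\tfrac{v-ku}{2\pi i}-\tfrac12)}{\theta^3(\tfrac{u}{2\pi i})\theta(\tfrac{v}{2\pi i})\theta(\tfrac{v-ju}{2\pi i})\theta(\tfrac{v-ku}{2\pi i})}$ times the normalizing constant $\theta'(0)^6/((2\pi i)^6\theta^6(\tfrac12))$. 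Here the relevant characteristic power series is $Q(x)=x/f(x)$ with $f(x)=\theta(x-\tfrac12)\theta'(0)/(\theta(x)\theta(-\tfrac12))$ up to scaling, and $f$ has the key property that $f(x)$ has a simple zero at $x=0$ with the right normalization, so that $\mathrm{Res}_0$ in the $u$-variable picks out exactly a double-pole-type contribution after dividing by $f(u)^3$. Applying Theorem~\ref{main1} (equivalently Theorem~\ref{main}) converts $\int_X$ into the iterated residue $\mathrm{Res}_0\{\mathrm{Res}_0(\cdots)\,du\}\,dv$, which is the displayed chain of equalities.

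The core computation is then the two successive residues. After rescaling $u\mapsto 2\pi i u$, $v\mapsto 2\pi i v$, the inner $\mathrm{Res}_0$ in $u$ is a residue at a pole of order three coming from $\theta^3(u)$ in the denominator; I would expand $\theta^3(u-\tfrac12)/\theta^3(u)$ and the two factors $\theta(v-ju-\tfrac12)/\theta(v-ju)$, $\theta(v-ku-\tfrac12)/\theta(v-ku)$ in Taylor series in $u$ up to order $u^2$, using $\theta(u)=\theta'(0)u+\tfrac16\theta^{(3)}(0)u^3+\cdots$ (oddness of $\theta$) and $\theta(u-\tfrac12)=\theta(-\tfrac12)+\theta'(-\tfrac12)u+\cdots$ together with $\theta(-\tfrac12)=-\theta(\tfrac12)$, $\theta'(-\tfrac12)=\theta'(\tfrac12)$, etc. Extracting the coefficient of $u^2$ produces precisely the bracketed expression $\frac{3\theta''(\tfrac12)}{2\theta(\tfrac12)}-\frac{\theta^{(3)}(0)}{2\theta'(0)}+\frac{j^2+k^2}{2}\big(\tfrac{\theta''(v+\frac12)}{\theta(v+\frac12)}-\tfrac{\theta''(v)}{\theta(v)}\big)+jk\big(\cdots\big)+(j+k)^2\big(\cdots\big)$ displayed in the excerpt; here I would be careful that the substitution $v-ju\mapsto$ leading term shifts the theta arguments by $-\tfrac12$, which is why $v+\tfrac12$ rather than $v-\tfrac12$ appears. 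The outer $\mathrm{Res}_0$ in $v$ is again a triple pole (from the residual $\theta^3(v)$ after combining $\theta^3(\tfrac{v}{2\pi i})$ with the $\theta(\tfrac{v-ju}{2\pi i})$, $\theta(\tfrac{v-ku}{2\pi i})$ factors evaluated at $u=0$), so one more second-order Taylor expansion in $v$, together with the vanishing of odd-derivative combinations and the heat-equation-type identities among $\theta^{(k)}(0),\theta^{(k)}(\tfrac12)$, reduces everything to the constant claimed, with the $jk$ and $(j+k)^2$ terms recombining into the single coefficient $j^2+k^2-jk$.

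The main obstacle is bookkeeping discipline in the two Laurent expansions: one must carry enough terms (order $u^2$ and $v^2$) in products of three or four theta quotients, correctly use the parity relations $\theta(-z)=-\theta(z)$ and the resulting facts $\theta'(0)\neq0$, $\theta''(0)=0$, $\theta^{(4)}(0)=0$ while $\theta'(\tfrac12)=0$, $\theta''(\tfrac12)\neq0$, $\theta^{(3)}(\tfrac12)=0$ (because $\theta(z-\tfrac12)$ has the opposite parity structure), and then simplify the $v$-dependent pieces — which a priori look like they survive — down to constants. The identity $(j^2+k^2)+2jk-(j+k)^2\cdot(\text{something})=j^2+k^2-jk$ type cancellation, and showing that all genuinely $v$-dependent residues vanish or telescope, is the delicate point; everything else is a mechanical, if lengthy, residue calculation. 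Once the constant is pinned down, equating it with the combinatorial quadruple sum computed in the first paragraph yields the stated identity, completing the proof.
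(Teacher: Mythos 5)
Your proposal is correct and follows essentially the same route as the paper: it evaluates the Borisov--Gunnells toric form of the $\mathbb{C}P^2$-bundle over $\mathbb{C}P^2$ once combinatorially from the fan with $\deg\equiv\tfrac12$ to get the quadruple sum, and once geometrically via Theorem~\ref{BG} together with the iterated-residue evaluation of Theorem~\ref{main}, then Taylor-expands the two triple poles in $u$ and then in $v$ using the parity of $\theta$ at $0$ and at $\tfrac12$. The only quibble is the aside identifying the characteristic series via $f(x)=\theta(x-\tfrac12)\theta'(0)/(\theta(x)\theta(-\tfrac12))$ and asserting it has a simple zero at $x=0$ (it has a simple pole there, and neither you nor the paper actually needs this $f$), but that remark plays no role in the argument.
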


\vskip.2cm

\subsection{Hirzebruch surface}

Hirzebruch surface is a $\mathbb{C} P^1$  bundle over $\mathbb{C} P^1$, denoted by  $F_k=\mathbb{C} P(\underline{\mathbb{C}}\oplus\mathcal{O}(k))$ .  Its corresponding fan  is spanned by four primitive vectors  $\textbf{e}_1, \ \textbf{e}_2, \  -\textbf{e}_1+k\textbf{e}_2, \ -\textbf{e}_2$, see Figure $2$.
\begin{center}
	\begin{tikzpicture}
		\fill (0,0)  circle (2pt);
		\draw[->] (0,0)--(2,0);
		\node[right] at (2,0) {$\textbf{e}_1$};
		
		\draw[->] (0,0)--(0,2);
		\node[right] at (0,2) {$\textbf{e}_2$};
		
		\draw[->] (0,0)--(0,-2);
		
		\node[right] at (0,-2) {$-\textbf{e}_2$};
		
		\draw[->] (0,0)--(-1,2);
		
		\node[left] at (-1,2) {$-\textbf{e}_1+k\textbf{e}_2$};
		
		\node[below] at (0,-2.5) {Figure 2};
	\end{tikzpicture}
\end{center}

Assume that $\deg$ takes $\alpha_1, \alpha_2, \alpha_3, \alpha_4$ on the generators $ \textbf{e}_1,\ -\textbf{e}_1+k\textbf{e}_2,\ -\textbf{e}_2,\ \textbf{e}_2,$ respectively. Then the toric form is
\begin{align*}
	& f_{N,\deg}(q)\\= & \sum_{a,b\in \mathbb{Z}} \frac{1}{(1-e^{2\pi i \alpha_1}q^b)(1-e^{2\pi i \alpha_4}q^a)}+ \frac{1}{(1-e^{2\pi i \alpha_1}q^b)(1-e^{2\pi i \alpha_3}q^{-a})}\\
	& + \frac{1}{(1-e^{2\pi i \alpha_3}q^{-a})(1-e^{2\pi i \alpha_2}q^{ka-b})}+ \frac{1}{(1-e^{2\pi i \alpha_4}q^{a})(1-e^{2\pi i \alpha_2}q^{ka-b})}\\
	& -\frac{1}{1-e^{2\pi i \alpha_4}q^a}-\frac{1}{1-e^{2\pi i \alpha_1}q^b}-\frac{1}{1-e^{2\pi i \alpha_2}q^{ka-b}}-\frac{1}{1-e^{2\pi i \alpha_3}q^{-a}}+1\\
	=&\sum_{a,b\in \mathbb{Z}} \frac{(1-e^{2\pi i (\alpha_3+\alpha_4)})(1-e^{2\pi i (\alpha_1+\alpha_2)}q^{ka})}{(1-e^{2\pi i \alpha_4}q^a)(1-e^{2\pi i \alpha_1}q^b)(1-e^{2\pi i \alpha_3}q^{-a})(1-e^{2\pi i \alpha_2}q^{ka-b}) }.
\end{align*}
On the other hand, we have
\begin{align*}
	& f_{N,\deg}(q)\\
	= & \int_X \prod_{i=1}^4 \frac{(D_i/2\pi i)\theta(D_i/2\pi i-\alpha_i)\theta'(0)}{\theta(D_i/2\pi i)\theta(-\alpha_i)} \\
	=& \frac{\theta'(0)^4}{(2\pi i)^4\prod_{i=1}^4 \theta(-\alpha_i)}\textrm{Res}_0 \left\{ \textrm{Res}_0   \frac{\theta(\frac{u}{2\pi i}-\alpha_1)\theta(\frac{u}{2\pi i}-\alpha_2)\theta(\frac{v}{2\pi i}-\alpha_3)\theta(\frac{v-ku}{2\pi i}-\alpha_4)}{\theta^2(\frac{u}{2\pi i})\theta(\frac{v}{2\pi i})\theta(\frac{v-ku}{2\pi i})} du\right\}dv\\
	=&  \frac{\theta'(0)^4}{(2\pi i)^2\prod_{i=1}^4 \theta(-\alpha_i)}\textrm{Res}_0 \frac{\theta(v-\alpha_3)}{\theta(v)} \left\{ \textrm{Res}_0   \frac{\theta(u-\alpha_1)\theta(u-\alpha_2)\theta(v-ku-\alpha_4)}{\theta^2(u)\theta(v-ku)} du\right\}dv\\
	=&\frac{\theta'(0)^2}{(2\pi i)^2  \theta(-\alpha_3)\theta(-\alpha_4)}\textrm{Res}_0 \frac{\theta(v-\alpha_3)\theta(v-\alpha_4)}{\theta^2(v)}\Big\{\frac{\theta'(-\alpha_1)}{\theta(-\alpha_1)}+ \frac{\theta'(-\alpha_2)}{\theta(-\alpha_2)}-k\frac{\theta'(v-\alpha_4)}{\theta(v-\alpha_4)}\\
& +k\frac{\theta'(v)}{\theta(v)} \Big\} dv.
\end{align*}
Similarly, we have
$$\textrm{Res}_0\ \frac{\theta(v-\alpha_3)\theta(v-\alpha_4)}{\theta^2(v)} dv= \frac{\theta(-\alpha_3)\theta(-\alpha_4)}{ \theta'^2(0)}\left\{\frac{\theta'(-\alpha_3)}{\theta(-\alpha_3)}+ \frac{\theta'(-\alpha_4)}{\theta(-\alpha_4)}     \right\},$$
$$\textrm{Res}_0\ \frac{\theta(v-\alpha_3)\theta'(v-\alpha_4)}{\theta^2(v)} dv= \frac{\theta(-\alpha_3)\theta'(-\alpha_4)}{ \theta'^2(0)}\left\{\frac{\theta'(-\alpha_3)}{\theta(-\alpha_3)}+ \frac{\theta''(-\alpha_4)}{\theta'(-\alpha_4)}   \right  \}.$$
and
\begin{align*}
	& \textrm{Res}_0\ \frac{\theta(v-\alpha_3)\theta(v-\alpha_4)\theta'(v)}{\theta^3(v)}dv\\
	=& \textrm{Res}_0\ \frac{(\theta(- \alpha_3)+\theta'(-\alpha_3)v+\frac{\theta''(-\alpha_3)v^2}{2})(\theta(- \alpha_4)+\theta'(-\alpha_4)v+\frac{\theta''(-\alpha_4)v^2}{2})  (\theta'(0)+\frac{\theta^{(3)}(0)v^2}{2})}{(\theta'(0)v+\theta^{(3)}(0)v^3/6)^3}dv\\
	=& \textrm{Res}_0\ \frac{(\theta(- \alpha_3)+\theta'(-\alpha_3)v+\frac{\theta''(-\alpha_3)v^2}{2})(\theta(- \alpha_4)+\theta'(-\alpha_4)v+\frac{\theta''(-\alpha_4)v^2}{2})} { \theta'^2(0) v^3 }dv \\
	=&\frac{\theta(- \alpha_3)\theta(- \alpha_4)}{2\theta'^2(0)}\left\{ ( \frac{\theta''(-\alpha_3)}{\theta(-\alpha_3)}+\frac{\theta''(-\alpha_4)}{\theta(-\alpha_4)}+2\frac{\theta'(-\alpha_3)\theta'(-\alpha_4)}{\theta(-\alpha_3)\theta(-\alpha_4)}) \right\}.
\end{align*}

Combining with the above calculations, we have that

\vskip .2cm

\begin{proposition}\label{Hir} For Hirzebruch surface $F_k=\mathbb{C} P(\underline{\mathbb{C}}\oplus\mathcal{O}(k))$, we have  theta function identity
	\begin{align*}
		&\sum_{a,b\in \mathbb{Z}} \frac{(1-e^{2\pi i (\alpha_3+\alpha_4)})(1-e^{2\pi i (\alpha_1+\alpha_2)}q^{ka})}{(1-e^{2\pi i \alpha_4}q^a)(1-e^{2\pi i \alpha_1}q^b)(1-e^{2\pi i \alpha_3}q^{-a})(1-e^{2\pi i \alpha_2}q^{ka-b}) }\\
		=&\frac{1}{(2\pi i)^2} \left\{(\frac{\theta'(-\alpha_3)}{\theta(-\alpha_3)}+ \frac{\theta'(-\alpha_4)}{\theta(-\alpha_4)} )(\frac{\theta'(-\alpha_1)}{\theta(-\alpha_1)}+ \frac{\theta'(-\alpha_2)}{\theta(-\alpha_2)})+\frac{k}{2}(\frac{\theta''(-\alpha_3)}{\theta(-\alpha_3)}- \frac{\theta''(-\alpha_4)}{\theta(-\alpha_4)} )   \right\}.
\end{align*}\end{proposition}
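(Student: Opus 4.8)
The plan is to compute the toric form $f_{N,\deg}(q)$ of $F_k$ in two independent ways and to identify the results. On the combinatorial side I would start from the Borisov--Gunnells definition. The fan of $F_k$ consists of the four two-dimensional cones $\langle\mathbf{e}_1,\mathbf{e}_2\rangle$, $\langle\mathbf{e}_1,-\mathbf{e}_2\rangle$, $\langle-\mathbf{e}_1+k\mathbf{e}_2,\mathbf{e}_2\rangle$, $\langle-\mathbf{e}_1+k\mathbf{e}_2,-\mathbf{e}_2\rangle$, the four rays, and the origin. For each cone $C$ the generating series $\sum_{n\in C}q^{m\cdot n}e^{2\pi i\deg(n)}$ analytically continues to a product of factors $1/(1-e^{2\pi i\deg(v)}q^{m\cdot v})$ taken over the ray generators $v$ of $C$; assembling these with the sign $(-1)^{\mathrm{codim}\,C}$ and summing over $m=(a,b)\in M=\mathbb{Z}^2$ yields the nine-term expression shown just above the statement, which after clearing denominators collapses to
$$\sum_{a,b\in\mathbb{Z}}\frac{(1-e^{2\pi i(\alpha_3+\alpha_4)})(1-e^{2\pi i(\alpha_1+\alpha_2)}q^{ka})}{(1-e^{2\pi i\alpha_4}q^a)(1-e^{2\pi i\alpha_1}q^b)(1-e^{2\pi i\alpha_3}q^{-a})(1-e^{2\pi i\alpha_2}q^{ka-b})},$$
the left-hand side of the claimed identity; the rearrangement is legitimate by the absolute convergence noted just after the definition of $f_{N,\deg}$.

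On the geometric side I would invoke Theorem~\ref{BG}. In $H^*(F_k)=\mathbb{Z}[u,v]/\langle u^2,\,v(v-ku)\rangle$ the toric divisors are $D_1=D_2=u$, $D_3=v$ and $D_4=v-ku$, the degree value $\alpha_i$ being attached to $D_i$ — here one must double-check the sign of $ku$ against the chosen projectivization convention for $F_k$. The integrand $\prod_{i=1}^4\frac{(D_i/2\pi i)\,\theta(D_i/2\pi i-\alpha_i)\,\theta'(0)}{\theta(D_i/2\pi i)\,\theta(-\alpha_i)}$ is then a top-degree class on $F_k$, and Theorem~\ref{main} rewrites $\int_{F_k}(\cdot)$ as the iterated residue against $f_1f_2=u^2\cdot v(v-ku)$. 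The factors $u^2$, $v$ and $v-ku$ cancel the matching numerator monomials; after rescaling $u,v$ by $2\pi i$ (which collapses $(2\pi i)^4$ to $(2\pi i)^2$) and pulling the $u$-free factor $\theta(v-\alpha_3)/\theta(v)$ out of the inner residue, two elementary residue computations remain. For the inner residue in $u$, writing $\theta^2(u)=u^2(\theta(u)/u)^2$ and using that $\theta(u)/u$ is even shows the residue equals $(\theta'(0))^{-2}$ times the $u$-derivative at $0$ of the numerator — a logarithmic derivative in $\alpha_1,\alpha_2$ plus the $k$-linear term $-k\,\theta'(v-\alpha_4)/\theta(v-\alpha_4)+k\,\theta'(v)/\theta(v)$. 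The outer residue in $v$ then splits into three pieces, with poles of orders $2$, $2$ and $3$; the same ``$\theta(v)=v\cdot(\text{even})$'' device evaluates each, the crucial point for the triple pole being that the odd-order corrections of $\theta(v)/v$ and of $\theta'(v)/\theta'(0)$ cancel, so that $\mathrm{Res}_0\frac{\theta(v-\alpha_3)\theta(v-\alpha_4)\theta'(v)}{\theta^3(v)}\,dv$ reduces to the $v^2$-coefficient of $\theta(v-\alpha_3)\theta(v-\alpha_4)/\theta'(0)^2$.

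Finally I would substitute the three residue values, cancel the overall prefactor $\theta'(0)^2/\big((2\pi i)^2\theta(-\alpha_3)\theta(-\alpha_4)\big)$ against the factor $\theta(-\alpha_3)\theta(-\alpha_4)/\theta'(0)^2$ common to all three, and collect terms: the $\pm k\,\theta'(-\alpha_3)\theta'(-\alpha_4)/\big(\theta(-\alpha_3)\theta(-\alpha_4)\big)$ contributions cancel and the $\theta''(-\alpha_4)/\theta(-\alpha_4)$ ones combine into $\tfrac{k}{2}\big(\theta''(-\alpha_3)/\theta(-\alpha_3)-\theta''(-\alpha_4)/\theta(-\alpha_4)\big)$, leaving exactly the right-hand side of the proposition. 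I expect the main difficulty to be bookkeeping rather than any conceptual step: pinning down the fan-to-cohomology dictionary and the $ku$-signs, tracking the $(2\pi i)$-powers through the two rescalings, and truncating each theta-function expansion to precisely the order needed for its pole — in particular spotting the even/odd cancellations that keep the triple-pole residue a short computation.
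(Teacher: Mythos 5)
Your proposal follows the paper's own route step for step: compute $f_{N,\deg}(q)$ once from the Borisov--Gunnells fan definition (the nine-term sum over cones collapsing to the left-hand side) and once via Theorem~\ref{BG} together with the iterated residue of Theorem~\ref{main}, evaluate the inner $u$-residue at its double pole to produce the logarithmic derivatives in $\alpha_1,\alpha_2$ together with the $k$-linear $\theta'(v-\alpha_4)/\theta(v-\alpha_4)$ and $\theta'(v)/\theta(v)$ terms, and then split the outer $v$-residue into the two double-pole and one triple-pole pieces that the paper evaluates separately before collecting and cancelling the $k\,\theta'\theta'/(\theta\theta)$ contributions. The details you flag as requiring care (the fan-to-divisor dictionary $D_1=D_2=u$, $D_3=v$, $D_4=v-ku$; the $2\pi i$ rescaling; the even/odd cancellation that reduces the triple pole to a $v^2$-coefficient extraction) are exactly those the paper handles, so this is essentially the same proof.
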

\vskip.2cm
It is easy to check that if $\alpha_3+\alpha_4\in \mathbb{Z}$, both sides equal zero.

\subsection{Vanishing result}\

Besides the accurate calculation in low dimensions, we can also get a Landweber-Stong type vanishing result for a family of generalized Bott manifolds.

\vskip.2cm

\begin{theorem}\label{vanish toric}
	Let $V$ be the toric variety $\CC P(\eta^{\otimes i_1}\oplus \eta^{\otimes i_2}\oplus \eta^{\otimes i_3}\oplus \underline{\mathbb{C}})$ over $\CC P^{n_1}$, where $\eta$ denotes the tautological bundle of $\CC P^{n_1}$. 	
	If $(i_1, \ i_2,\  i_3)$ are coprime such that $ \ (i_1,\ i_1-i_2,\ i_1-i_3), \ (i_2,\ i_2-i_1,\ i_2-i_3), \ (i_3,\ i_3-i_1,\ i_3-i_2)$ are also coprime respectively, $\sum_{j=0}^{3} \alpha_{n_1+2+j}\in \mathbb{Z}$, and $\sum_{i=1}^{n_1+1} \alpha_i- \sum_{j=1}^{3}i_j \alpha_{n_1+2+j}\in \mathbb{Z}$, then the toric form $f_{N,\deg}(q)=0$.
\end{theorem}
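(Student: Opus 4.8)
The plan is to run the Landweber--Stong vanishing argument inside the two--variable iterated residue of Theorem~\ref{main}. First I would record the cohomological data: $V$ is the two--stage generalized Bott tower $V\to\CC P^{n_1}\to\mathrm{pt}$, so in Theorem~\ref{main} one uses $f_1=u^{n_1+1}$ and $f_2=v\prod_{j=1}^3(v-i_ju)$ with $u=c_1(\overline{\eta})$; among the toric divisors of $V$, the $n_1+1$ coming from the base $\CC P^{n_1}$ all represent $u$, while the four fibre divisors (attached to $\underline{\mathbb{C}},\eta^{\otimes i_1},\eta^{\otimes i_2},\eta^{\otimes i_3}$) represent $v,\ v-i_1u,\ v-i_2u,\ v-i_3u$. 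Substituting the Borisov--Gunnells characteristic class of Theorem~\ref{BG} into the iterated--residue formula and cancelling the polynomial factors $(D_i/2\pi i)$ against $f_1f_2$, then rescaling $u,v$ by $2\pi i$, one obtains
\[
f_{N,\deg}(q)=c\cdot\mathrm{Res}_0\Big\{\mathrm{Res}_0\ \Psi(u,v)\,du\Big\}\,dv,\qquad
\Psi(u,v)=\frac{\displaystyle\prod_{i=1}^{n_1+1}\theta(u-\alpha_i)\cdot\theta(v-\alpha_{n_1+2})\prod_{j=1}^3\theta(v-i_ju-\alpha_{n_1+2+j})}{\theta(u)^{n_1+1}\,\theta(v)\prod_{j=1}^3\theta(v-i_ju)},
\]
with the nonzero constant $c=(2\pi i)^{-(n_1+3)}\,\theta'(0)^{n_1+5}\big/\prod_{i}\theta(-\alpha_i)$.

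Next I would exploit the quasi--periodicity $\theta(z+1)=-\theta(z)$ and $\theta(z+\tau)=-e^{-\pi i\tau-2\pi iz}\theta(z)$ to see that the two integrality hypotheses are exactly what make $\Psi$ doubly periodic. Under $u\mapsto u+1$ and under $v\mapsto v+1$ the signs $-1$ cancel between numerator and denominator, so $\Psi$ is unchanged; a direct computation also gives
\[
\Psi(u,v+\tau)=e^{\,2\pi i\sum_{j=0}^3\alpha_{n_1+2+j}}\,\Psi(u,v),\qquad
\Psi(u+\tau,v)=e^{\,2\pi i\left(\sum_{i=1}^{n_1+1}\alpha_i-\sum_{j=1}^3 i_j\alpha_{n_1+2+j}\right)}\,\Psi(u,v).
\]
Thus $\sum_{j=0}^3\alpha_{n_1+2+j}\in\Z$ makes $\Psi$ elliptic in $v$, and $\sum_{i}\alpha_i-\sum_{j}i_j\alpha_{n_1+2+j}\in\Z$ makes it elliptic in $u$; hence $\Psi$ descends to a meromorphic function on $E_\tau\times E_\tau$, where $E_\tau=\CC/(\Z+\Z\tau)$, with polar divisor contained in $\{u=0\}\cup\{v=0\}\cup\bigcup_{j=1}^3\{v=i_ju\}$.

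Then I would carry out the inner residue and finish by the residue theorem on $E_\tau$. Put $g(v):=\mathrm{Res}_0\,\Psi(u,v)\,du$; since $\Psi$ is elliptic in $v$, so is $g$. For $v\notin\Z+\Z\tau$ the only pole of $\Psi(\cdot,v)$ near $u=0$ is $u=0$ itself (of order $n_1+1$), while the remaining poles of $\Psi(\cdot,v)$ in $u$, which lie on $u\in i_j^{-1}(v+\Z+\Z\tau)$, reach $u=0$ only when $v\in\Z+\Z\tau$. Consequently $g$ is holomorphic on $E_\tau\setminus\{0\}$ and extends to a meromorphic function on $E_\tau$ whose only pole is $v=0$. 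The residue theorem on the elliptic curve $E_\tau$ then forces
\[
\mathrm{Res}_0\,g(v)\,dv=\sum_{p\in E_\tau}\mathrm{Res}_p\,g(v)\,dv=0,
\]
so $f_{N,\deg}(q)=c\cdot 0=0$.

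I expect the main obstacle to be the pole bookkeeping in the last step. One must be certain that the function $g$ produced by the high--order inner residue is genuinely elliptic with a single pole---equivalently, when one reorganizes $\mathrm{Res}_0\,\Psi\,du$ by the residue theorem into a sum over the simple poles $u\in i_j^{-1}(v+\Z+\Z\tau)$ (as in the remark following the statement), that the divisors $\{v=i_ju\}$ do not merge with one another or with $\{u=0\}$ in codimension one. This is precisely what the coprimality of $(i_1,i_2,i_3)$ guarantees---note that it is equivalent to the coprimality of each triple $(i_\ell,\ i_\ell-i_m,\ i_\ell-i_k)$, since a common divisor of such a triple divides $i_\ell$ and $i_\ell-i_m$, hence $i_m$, hence all of $i_1,i_2,i_3$---so it keeps the relevant residues at simple, pairwise distinct poles and makes the combinatorial accounting consistent.
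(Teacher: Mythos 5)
Your proof is correct, and it takes a genuinely different (and cleaner) route than the one the paper intends. The paper says Theorem~\ref{vanish toric} is proved ``similarly to Theorem~\ref{technique}'': that argument first applies the residue theorem in the \emph{inner} variable $u$, trading the high-order pole of the form at $u=0$ for a sum of residues at the simple poles lying on $\{v\equiv i_ju\}$; the coprimality hypothesis is then invoked to make the resulting one-variable $v$-forms descend to $\CC/(\Z+\Z\tau)$, and a second application of the residue theorem in $v$, together with a term-by-term matching of residues, gives the cancellation. You instead go directly to the \emph{outer} residue: because $\Psi$ is $v$-elliptic, the inner residue $g(v)=\mathrm{Res}_{u=0}\Psi(u,v)\,du$ is an elliptic function of $v$, and expanding
\[
g(v)=\frac{1}{n_1!}\,\partial_u^{n_1}\Big[u^{n_1+1}\Psi(u,v)\Big]\Big|_{u=0}
\]
by Leibniz shows $g$ is a finite linear combination of products of derivatives of the functions $\theta(w-\alpha_{n_1+2+j})/\theta(w)$ evaluated at $w=v$, so its unique pole on $E_\tau=\CC/(\Z+\Z\tau)$ is at $v\equiv 0$; the residue theorem on $E_\tau$ then kills the single remaining residue in one stroke.

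What your route buys is a shorter, more transparent proof and in fact a modest strengthening: as written, your argument uses only the fibre integrality condition $\sum_{j=0}^3\alpha_{n_1+2+j}\in\Z$, and never actually needs the base condition $\sum_i\alpha_i-\sum_j i_j\alpha_{n_1+2+j}\in\Z$ nor the coprimality of the $i_j$'s. (Those hypotheses are what make the paper's $u$-first reduction to simple poles and its subsequent $v$-bookkeeping go through; you have bypassed that reduction entirely.) Consequently your closing paragraph, which tries to locate a role for coprimality in guaranteeing that the polar divisors do not merge, is not needed for your own argument --- it would be the right concern if you were following the paper's scheme, but you are not. Your parenthetical observation that coprimality of $(i_1,i_2,i_3)$ is equivalent to coprimality of each displayed triple is correct: a common divisor of $(i_\ell,\,i_\ell-i_m,\,i_\ell-i_k)$ divides all of $i_1,i_2,i_3$, and conversely.
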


\vskip.2cm
The proof of Theorem~ \ref{vanish toric} is similar to that of Theorem~\ref{technique}.

\section{Witten genus: a vanishing result}\label{Witten genus}

Let $M$ be a $4k$ dimensional closed oriented smooth manifold, $E$ be a complex vector bundle over $M$. For any complex number $t$, set
$$S_t(E)=\underline{\CC}+tE+t^2S^2(E)+\cdots ,$$
where $S^j(E)$ is the $j$-th symmetric power of $E$.

\vskip.2cm

Let $q=e^{2\pi i\tau}$ with $\tau\in \mathbb{H}$, the upper half plane. Witten defined
$$\Theta_q(E)=\underset{n\geq 1}{\bigotimes}\ S_{q^n}(E),$$ and then  the \textbf{Witten genus}
$$\varphi_W(M)=\left\langle\widehat{A}(M)ch(\Theta_q(TM\otimes \CC-\underline{\CC}^{4k})), [M]\right \rangle.$$

\vskip.2cm
Let ${\pm 2\pi i x_j \ ( 1\leq j\leq 2k)}$ be the formal Chern roots of $TM\otimes \CC$. Then the Witten genus can be rewritten as (c.f. \cite{Liu})

$$\varphi_W(M)=\left\langle \left( \prod_{j=1}^{2m}x_{j}\frac{\theta ^{\prime }(0,\tau
	)}{\theta (x_{j},\tau )}\right) ,[M]\right\rangle,$$
where $\theta(z,\tau)$ is the Jacobi theta function
$$\theta(z,\tau)=2 q^{1/8}sin(\pi z)\prod_{j=1}^\infty [(1-q^j)(1-e^{2\pi i z}q^j)(1-e^{-2\pi i z}q^j)]$$ where $q=e^{2\pi i \tau}.$


\vskip.2cm
The manifold $M$ is called spin if $\omega_1(M)=0,\ \omega_2(M)=0,$ where $\omega_1(M),\ \omega_2(M)$ are the first and second Stiefel-Whitney classes of $M$. According to Atiyah-Singer index theorem, when $M$ is spin, $\varphi_W(M)\in \Z[[q]]$.

\vskip.2cm

A spin manifold $M$ is called string if the characteristic class $\frac{1}{2}p_1(M)=0$, where $p_1(M)$ is the first Pontryagin class of $M$. It is well known that if $M$ is string, $\varphi_W(M)$ is a modular form of weight $2k$ over $SL(2,\mathbb{Z})$, see \cite[Hirzebruch]{HBJ94}.  In this section, we mainly discuss the Witten genus of string manifolds.

\subsection{String complete intersections in  generalized Bott manifolds}\

Consider a two staged generalized Bott manifold $V \coloneqq \CC P(\eta^{\otimes i_1}\oplus \cdots \eta^{\otimes i_{n_2}}\oplus \underline{\CC}) $ over $\CC P^{n_1}$, where $\eta$ denotes the tautological bundle of $\CC P^{n_1}$.

\vskip.2cm

Let $\textbf{I}=(i_1,\cdots,i_{n_2})$ denote the index of projective bundle and $H^\textbf{I}_{n_1,n_2}(d_1,d_2)$  denote the submanifold Poincar\'e dual to $d_1u+d_2v\in H^2(V;\Z)$. Let  $i: H^\textbf{I}_{n_1,n_2}(d_1,d_2)\longrightarrow V$ be the natural embedding, and $\nu$ denote the normal bundle of this embedding. We have
$$i^*(TV)\cong i^*(\nu)\oplus TH^\textbf{I}_{n_1, n_2}(d_1,d_2).$$
Thus$$c_1(H^\textbf{I}_{n_1,n_2}(d_1,d_2))=i^*((n_1+1)u+v+\sum_{j=1}^{n_2} (v-i_j u)- (d_1u+d_2v) ).$$ and
 \begin{align*}
 p_1(H^{\bf I}_{n_1,n_2}(d_1,d_2))= &\ i^*\Big\{\big(n_1+1+\sum_{j=1}^{n_2}i_j^2-d_1^2\big)u^2+
 (1+n_2-d_2^2)v^2\\
 &-2\big(d_1d_2+\sum_{j=1}^{n_2}i_j\big)uv\Big\}.
 \end{align*}


By Lefschetz hyperplane theorem,   it is obvious that there is no torsion in the cohomology ring of complete intersections in generalized Bott manifolds, thus $p_1=0$ implies $\frac{1}{2}p_1=0$.
\vskip.2cm
\begin{lemma}\label{string}
	Twisted Milnor hypersurfaces
	$H^{\bf I}_{n_1,n_2}(d_1,d_2)$ can not be string for $n_2\geq3$.
\end{lemma}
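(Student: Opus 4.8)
The plan is to convert the string condition into an over-determined system of Diophantine equations in the data $(n_1,n_2,{\bf I},d_1,d_2)$ and then rule it out with a one-line inequality. Since the cohomology of a complete intersection in a generalized Bott manifold is torsion-free, the condition $\frac{1}{2}p_1=0$ is equivalent to $p_1=0$, so it suffices to prove that $p_1\big(H^{\bf I}_{n_1,n_2}(d_1,d_2)\big)\ne 0$ whenever $n_2\ge 3$.

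First I would read off the three coefficients from the displayed expression for $p_1$ above: writing $H=H^{\bf I}_{n_1,n_2}(d_1,d_2)$, $i\colon H\hookrightarrow V$, and ${\bf I}=(i_1,\dots,i_{n_2})$, one has $p_1(H)=i^*\big(A\,u^2+B\,v^2+C\,uv\big)$ with $A=n_1+1+\sum_j i_j^2-d_1^2$, $B=1+n_2-d_2^2$ and $C=-2\big(d_1d_2+\sum_j i_j\big)$. Next I would note that for $n_2\ge 3$ (and, say, $n_1\ge 2$; the small cases $n_1\le 1$, in which $u^2$ already vanishes in $H^*(V)$, require a minor separate adjustment) the two defining relations $u^{n_1+1}$ and $v\prod_j(v-i_ju)$ of $H^*(V)$ sit in cohomological degree $\ge 6$, so $\{u^2,uv,v^2\}$ is a $\Z$-basis of $H^4(V)$; moreover $\dim_{\CC}V=n_1+n_2\ge 5$, so the Lefschetz hyperplane theorem makes $i^*\colon H^4(V)\to H^4(H)$ injective. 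Hence $p_1(H)=0$ forces $A=B=C=0$, that is,
\[
 d_1^2=n_1+1+\sum_{j=1}^{n_2}i_j^2,\qquad d_2^2=n_2+1,\qquad d_1d_2=-\sum_{j=1}^{n_2}i_j.
\]

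The arithmetic heart of the argument is then immediate. By the Cauchy--Schwarz inequality applied to the vectors $(i_1,\dots,i_{n_2})$ and $(1,\dots,1)$,
\[
 \sum_{j=1}^{n_2}i_j^2\ \ge\ \frac{1}{n_2}\Big(\sum_{j=1}^{n_2}i_j\Big)^2=\frac{(d_1d_2)^2}{n_2}=\frac{d_1^2(n_2+1)}{n_2},
\]
and substituting into the first equation gives $d_1^2\ge n_1+1+\frac{n_2+1}{n_2}\,d_1^2$, i.e. $-\frac{d_1^2}{n_2}\ge n_1+1>0$, which is absurd. Therefore $p_1(H)\ne 0$ and $H$ is not string.

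I expect the only genuine subtlety to be the cohomological bookkeeping in the middle step — checking that $u^2$, $uv$, $v^2$ really remain an independent triple in $H^4(V)$ and that $i^*$ is injective there. This is precisely where $n_2\ge 3$ is used: for $n_2\le 2$ the monomial $v^2$ is already expressible through $u^2$ and $uv$ in $H^*(V)$, so the coefficient $B$ can be absorbed and the system above need not be forced. The Cauchy--Schwarz step itself goes through verbatim for every $n_2\ge 1$, which is a reassuring sign that the statement is sharp in the intended variable.
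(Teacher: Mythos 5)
Your argument reaches the same arithmetic bottleneck as the paper --- the Cauchy--Schwarz contradiction to the simultaneous system $A=B=C=0$ for the coefficients of $u^2$, $v^2$, $uv$ --- but you get there by a mildly different topological route. You invoke the weak Lefschetz theorem to make $i^*\colon H^4(V)\to H^4(H)$ injective and then read off $A=B=C=0$ from the monomial basis $\{u^2,uv,v^2\}$ of $H^4(V)$. The paper instead pushes forward with $i_!$, using $i_!i^*\alpha = \alpha\cdot(d_1u+d_2v)$, and argues that $(d_1u+d_2v)(Au^2+Bv^2+Cuv)$ cannot vanish in $H^6(V)$; this avoids the need for injectivity of $i^*$ but shifts the burden to a zero-divisor analysis in $H^6(V)$, which is arguably more delicate. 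Both routes ultimately require $\{u^2,uv,v^2\}$ to be linearly independent in $H^4(V)$, which forces the ideal generator $u^{n_1+1}$ to live in degree $\geq 6$, i.e.\ $n_1\geq 2$.

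You correctly and explicitly flag the case $n_1\leq 1$ as requiring a separate adjustment; the paper's proof passes over this silently. This is not merely a formal caveat: taking $n_1=1$, $n_2=3$, ${\bf I}=(0,0,0)$, $(d_1,d_2)=(0,2)$ gives $V=\CC P^1\times\CC P^3$ and $H\cong\CC P^1\times Q$ with $Q\subset\CC P^3$ a smooth quadric, so $H\cong(\CC P^1)^{\times 3}$, which is spin with $p_1=0$ and hence string. In that example one has $A=2\neq 0$ while $B=C=0$, but $u^2=0$ already in $H^4(V)$, so the paper's intermediate claim ``$Au^2+Bv^2+Cuv\neq 0$'' fails and $p_1(H)=0$ after all. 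So the lemma as stated genuinely needs $n_1\geq 2$ (or some implicit non-degeneracy hypothesis on ${\bf I}$), and your flagged caveat is a real correction rather than a loose end.
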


\begin{proof}
	Let $i_!: H^*(H^{\bf I}_{n_1,n_2}(d_1,d_2)) \longrightarrow H^*(V)$ be the pushforward map.
		First we assume
	$$		\begin{cases}
			n_1+1+\sum\limits_{j=1}^{n_2}i_j^2=d_1^2\\
			1+n_2=d_2^2\\
			d_1d_2+\sum_{j=1}^{n_2}i_j=0.
		\end{cases}
$$
\vskip .2cm
	\noindent Thus $  ( n_1+1+\sum_{j=1}^{n_2}i_j^2)(1+n_2)=(d_1d_2)^2=(\sum_{j=1}^{n_2}i_j)^2$,
	which is impossible, and so  $(n_1+1+\sum_{j=1}^{n_2}i_j^2-d_1^2)u^2+(1+n_2-d_2^2)v^2+(-2\sigma_1-2d_1d_2)uv\neq 0$.
	
	\vskip.2cm
	On the other hand, if $p_1(H^{\bf I}_{n_1,n_2}(d_1,d_2))=0$, then
	\begin{align*}
		&i_!p_1(H^{\bf I}_{n_1,n_2}(d_1,d_2))\\
		=&i_!i^*\left\{(n_1+1+\sum_{j=1}^{n_2}i_j^2-d_1^2)u^2+(1+n_2-d_2^2)v^2+(-2\sigma_1-2d_1d_2)uv\right\}\\
		=&(d_1u+d_2v)\left\{(n_1+1+\sum_{j=1}^{n_2}i_j^2-d_1^2)u^2+(1+n_2-d_2^2)v^2+(-2\sigma_1-2d_1d_2)uv\right\}\\
		=&0.
	\end{align*}
	Since $d_1u+d_2v\neq 0$ as long as $ n_2\geq3$, it is impossible for $p_1=0$.
\end{proof}

\vskip .2cm
\begin{remark}
	It is possible for twisted Milnor hypersurface $H^{\bf I}_{n_1, n_2}(d_1, d_2)$ to be string when $n_2<3$. For example, choose $n_2=1, d_2=0$, we have
	$$ p_1=i^*\bigg\{(n_1+1+i_1^2-d_1^2)u^2+2v^2-2i_1uv\bigg\},$$
	where $2v^2-2i_1uv$ is killed by  the relation in the cohomology ring of 2 staged generalized Bott manifold. Thus if $n_1+1+i_1^2-d_1^2=0$, we have $p_1=0$. For example,   twisted Milnor hypersurface $H^{\pm 6}_{12, 1}(\pm 7, 0)$ is the required 24-dim string manifold.
\end{remark}

\vskip .2cm
Due to the restriction of Lemma \ref{string}, we proceed to investigate string complete intersections. The submanifold $H^{\bf I}_{n_1, n_2}(d_1, d_2; d_3, d_4)$ Poincar\'e dual to cohomology class $(d_1u+d_2v)\cdot(d_3u+d_4v)\in H^4(V;\Z)$  is string as long as the equation system
$${\centering
	\left\{
	\begin{array}{lr}
		n_1+1+\sum\limits_{j=1}^{n_2}i_j^2=d_1^2+d_3^2\\
		1+n_2=d_2^2+d_4^2\\
		d_1d_2+d_3d_4+\sum\limits_{j=1}^{n_2}i_j=0
	\end{array}\right.}$$
has integer solutions. Actually, there exist plenty of string complete intersections.

\subsection{Witten genus}\
Consider the Jacobi theta function
$$\theta(z,\tau)=2 q^{1/8}sin(\pi z)\prod_{j=1}^\infty [(1-q^j)(1-e^{2\pi i  z}q^j)(1-e^{-2\pi i  z}q^j)],$$
where $\theta$ admits some periodicity such that $\theta(z+1,\tau)=-\theta(z,\tau)$ and $\theta(z+b\tau,\tau)=(-1)^be^{-\pi i (2b z+b^2\tau)}\theta(z,\tau)$. Clearly  $\theta$ has simple zeros on lattice $\Z+\Z\tau$ and no pole. Applying our iterated residue, the Witten genus can be reformulated as
\begin{align*}
	&\varphi_W(H^{\bf I}_{n_1,n_2})\\
=&\left\langle\widehat{A}(H^{\bf I}_{n_1,n_2})ch(\Theta_q(TH^{\bf I}_{n_1,n_2}\otimes \CC-\underline{\CC}^{4k})), [H^{\bf I}_{n_1,n_2}]\right\rangle\\
=& \theta'^{n_1+n_2}(0)\Big\langle (\frac{u}{\theta(u)})^{n_1+1} \frac{v}{\theta(v)}\prod_{j=1}^{n_2} \frac{v-i_ju}{\theta(v-i_ju)}\frac{\theta(d_1u+d_2v)}{d_1u+d_2v}\frac{\theta(d_3u+d_4v)}{d_3u+d_4v}, [H^{\bf I}_{n_1,n_2}]  \Big\rangle\\
	=& \frac{\theta'^{n_1+n_2}(0)}{(2\pi i)^{ -2}} \Big\langle (\frac{u}{\theta(u)})^{n_1+1} \frac{v}{\theta(v)}\prod_{j=1}^{n_2} \frac{v-i_ju}{\theta(v-i_ju)}\theta(d_1u+d_2v)\cdot \theta(d_3u+d_4v), [V]  \Big\rangle\\
	=&  \frac{\theta'^{n_1+n_2}(0)}{(2\pi i)^{n_1+n_2-2}} \mathrm{Res}_0\left\{ \mathrm{Res}_0 \frac{\theta(d_1u+d_2v)\cdot \theta(d_3u+d_4v)}{\theta^{n_1+1}(u)\theta(v)\prod_{j=1}^{n_2} \theta(v-i_ju)}du\right\}dv.
\end{align*}
\begin{remark}
	If meromorphic function $g$ has pole of order $n$ at point $c$, then $$\mathrm{Res}_c\, g=\frac{1}{(n-1)!}\cdot \underset{z\rightarrow c}{\lim}\ \frac{d^{n-1}}{dz^{n-1}}((z-c)^ng(z)).$$ It is usually difficult to calculate the residue at high order poles for both variables $u$ and $v$ directly, thus we shall make use of residue theorem to reduce   iterated residue at high order poles to simple poles. This is the common operation for Landweber-Stong type vanishing results.
\end{remark}

\vskip .2cm
Under certain condition, we obtain a Landweber-Stong type vanishing result for string complete intersections in generalized Bott manifolds. We add the coprime condition to make sure that the poles except for $0$ are simple.
\begin{theorem}\label{technique}
	For a string complete intersection $H^{\bf I}_{n_1, \ 3}(d_1, d_2; d_3,d_4)$, if ${\bf I}=(i, \ j,\  k)$ are coprime such that $ \ (i,\ i-j,\ i-k), \ (j,\ j-i,\ j-k), \ (k,\ k-i,\ k-j)$ are also coprime respectively, then the Witten genus
	$$\varphi_W(H^{\bf I}_{n_1,\ 3}(d_1, d_2; d_3,d_4))=0.$$
\end{theorem}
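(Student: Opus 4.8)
The plan is to start from the iterated‐residue expression for the Witten genus of $H^{\bf I}_{n_1,3}(d_1,d_2;d_3,d_4)$ derived just above the statement, namely
\[
\varphi_W=\frac{\theta'^{n_1+3}(0)}{(2\pi i)^{n_1+1}}\,
\mathrm{Res}_0\left\{\mathrm{Res}_0
\frac{\theta(d_1u+d_2v)\,\theta(d_3u+d_4v)}
{\theta^{n_1+1}(u)\,\theta(v)\,\theta(v-iu)\theta(v-ju)\theta(v-ku)}\,du\right\}dv,
\]
and to reduce the inner residue at the high‐order pole $u=0$ to a sum of residues at simple poles. First I would fix $v$ (in the strip where the $u$‑integrand is elliptic) and view the inner integrand as a meromorphic elliptic‑type function of $u$ whose poles inside a fundamental domain are $u=0$ (order $n_1+1$), together with $u=v$, $u=v/i$, $u=v/j$, $u=v/k$ (each \emph{simple}, which is exactly where the coprimality hypotheses $(i,i-j,i-k)$ etc. are used — they guarantee these four points are distinct from each other and that no collision of poles occurs). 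By the residue theorem on the torus the sum of all residues in $u$ vanishes, so $\mathrm{Res}_{0}$ equals minus the sum of the residues at the four simple poles; the quasi‑periodicity factors $\theta(z+b\tau)=(-1)^b e^{-\pi i(2bz+b^2\tau)}\theta(z)$ show the $u$‑integrand is genuinely doubly (quasi‑)periodic up to a controlled factor, which is what legitimates applying the residue theorem.

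Next I would compute each of those four simple‑pole residues explicitly: at $u=v$ the residue is $-\tfrac{1}{\theta'(0)}\cdot\frac{\theta((d_1+d_2)v)\theta((d_3+d_4)v)}{\theta^{n_1+1}(v)\theta(v-iv)\theta(v-jv)\theta(v-kv)}$ (up to signs), and similarly at $u=v/i,\ v/j,\ v/k$, each producing a function of $v$ alone. Substituting these back, the outer $\mathrm{Res}_0$ in $v$ becomes a finite sum of ordinary one‑variable residues at $v=0$ of explicit products and quotients of theta functions. At this stage the \emph{string condition} enters decisively: the equations
\[
n_1+1+\textstyle\sum i_j^2=d_1^2+d_3^2,\quad 1+3=d_2^2+d_4^2,\quad d_1d_2+d_3d_4+\sum i_j=0
\]
force the $v$‑integrand, after the substitutions $u\mapsto v,\ v/i,\ v/j,\ v/k$, to be \emph{holomorphic at $v=0$} (the order of vanishing of the numerator theta functions at $v=0$ matches or exceeds the order of the pole coming from the denominator), precisely as in the Landweber–Stong argument for projective complete intersections; hence each outer residue vanishes and $\varphi_W=0$. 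Concretely I would track the leading order in $v$ of each factor using $\theta(cv)=c\,\theta'(0)v+O(v^3)$ and check the total exponent of $v$ is $\ge 0$ in every one of the four terms, where the string relations provide the needed arithmetic identity among the exponents.

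The main obstacle I anticipate is the bookkeeping in the reduction step rather than any conceptual difficulty: one must be careful that the coprimality conditions really do make \emph{all} the nonzero poles simple (including ruling out coincidences like $v/i=v/j$, i.e. $i=j$, and $v/i=v$, i.e. $i=1$ — here the hypotheses $(i,i-j,i-k)$ coprime are doing more than they first appear), and one must correctly handle the quasi‑periodicity factors so that the residue theorem on the torus applies to the $u$‑integrand with $v$ held fixed. A secondary subtlety is that after substitution the would‑be pole order of the $v$‑integrand at $v=0$ is large ($n_1+1$ from $\theta^{n_1+1}$ plus contributions from the other factors), so verifying holomorphy requires the full strength of all three string equations simultaneously, not just one of them; isolating exactly which combination of $d_1,d_2,d_3,d_4$ kills which term is the delicate part, and I would organize it by symmetry, treating the four simple‑pole contributions on an equal footing and showing each individually vanishes under the same relations.
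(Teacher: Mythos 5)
Your high-level plan — rewrite $\varphi_W$ as an iterated residue, observe that the string condition makes the $u$-integrand elliptic, and apply the residue theorem on the torus — matches the paper's opening move. But there are three genuine errors in the execution that change the nature of the argument.

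First, the pole analysis in $u$ is wrong. The denominator is $\theta^{n_1+1}(u)\,\theta(v)\,\theta(v-iu)\theta(v-ju)\theta(v-ku)$, so there is no pole at $u=v$ (the factor $\theta(v)$ is constant in $u$); your list includes a phantom pole. More seriously, each $\theta(v-i u)$ contributes not one but $i^2$ simple poles in a fundamental domain, namely $u=\frac{a+b\tau-v}{i}$ for $1\le a,b\le i$, and similarly $j^2$ and $k^2$ poles for the other two factors. Keeping only $u=v/i,\ v/j,\ v/k$ drops the overwhelming majority of the residues, so the identity $\mathrm{Res}_0 = -\sum(\text{other residues})$ you invoke is not the one you wrote.

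Second, and most importantly, your closing claim — that after substitution the string equations force each summand to be holomorphic at $v=0$, so each outer residue vanishes individually as in the classical Landweber--Stong computation for projective space — is not correct here. The paper computes $\mathrm{Res}_0^{(v)}\bigl\{\mathrm{Res}_{\frac{a+b\tau-v}{i}}^{(u)}\omega(u)\bigr\}$ explicitly and these quantities are generically nonzero. The actual mechanism is a second application of the residue theorem, now in the $v$-variable, to the terms coming from the corner poles $u=1+\tau-v/i$, etc.: the resulting $v$-forms $\omega(v)$ are again elliptic (this is where the coprimality of $(i,i-j,i-k)$, etc.\ enters — it guarantees the nonzero poles of $\omega(v)$ are simple, not that the four alleged $u$-poles are distinct), and one checks that the residue of $\omega(v)$ at $\frac{a+b\tau}{i}$ exactly equals the direct computation of $\mathrm{Res}_0^{(v)}\{\mathrm{Res}_{\frac{a+b\tau-v}{i}}^{(u)}\omega(u)\}$, while the residues at $\frac{a+b\tau}{i-j}$ cancel in pairs (with a sign $-1$) against the analogous contribution coming from the $j$-corner. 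So the vanishing is a nontrivial pairwise cancellation across the double residue sum, not termwise holomorphy at $v=0$. Your bookkeeping intuition correctly flagged that isolating which combination of $d_1,\dots,d_4$ kills which term is delicate, but the resolution is not that each term dies; it is that they cancel.
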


\begin{proof}
	$$\varphi_W(H^{\bf I})= \frac{\theta'^{n_1+n_2}(0)}{(2\pi i)^{n_1+n_2-2}} \mathrm{Res}_0 \left\{\mathrm{Res}_0 \frac{\theta(d_1u+d_2v) \theta(d_3u+d_4v)}{\theta^{n_1+1}(u)\theta(v)\theta(v+iu)\theta(v+ju)\theta(v+ku)}du\right\}dv.$$
	
	Denote $\omega(u)=\frac{\theta(d_1u+d_2v)\cdot \theta(d_3u+d_4v)}{\theta^{n_1+1}(u)\theta(v)\prod_{j=1}^{n_2} \theta(v-i_ju)}du$, we check its periodicity
	$$\omega(u+1)=(-1)^{d_1+d_3-n_1-1-\sum i_j}\omega(u)=\omega(u);$$
	\begin{align*}&\omega(u+ \tau)\\
= &\ \frac{(-1)^{d_1+d_3}e^{-\pi i(2d_1(d_1u+d_2v)+2d_3(d_3u+d_4v)+(d_1^2+d_3^2)\tau)}\theta(d_1u+d_2v)\cdot \theta(d_3u+d_4v)}{(-1)^{n_1+1+\sum i_j}e^{-\pi i((n_1+1)(2u+\tau)+\sum (-2i_j(v-i_ju)+i_j^2 \tau))} \theta^{n_1+1}(u)\theta(v)\prod \theta(v-i_ju)}du.
\end{align*}
	\vskip.2cm
	Since $H^{\bf I}_{n_1,n_2}(d_1,d_2; d_3,d_4)$ is string, we have $\omega(u+ \tau)=\omega(u)$. We can claim that $\omega(u)$ admits double periodicity on  lattice $L\coloneqq  \Z+\Z\tau $. Let $T^2\cong \CC/L$, thus $\omega(u)$ can be defined on closed surface $T^2$. Then we can apply residue theorem on $\omega(u)$.
	Since $\theta$ has simple zeros on $ \Z+\Z\tau $ and no poles,  the possible poles of $\omega$ are
	$$0,\  \frac{a+b\tau-v}{i} (1\leq a,\ b\leq i),\ \frac{c+d\tau-v}{j}(1\leq c,\ d\leq j),\ \frac{m+n\tau-v}{k}(1\leq m,\ n\leq k).$$
	
	Applying residue theorem on $u$, we get
	$$\{\mathrm{Res}_0+\mathrm{Res}_{1+\tau-\frac{v}{i}}+\mathrm{Res}_{1+\tau-\frac{v}{j}}+\mathrm{Res}_{1+\tau-\frac{v}{k}}+\mathrm{Res}_{\frac{a+b\tau-v}{i}}+\mathrm{Res}_{\frac{c+d\tau-v}{j}}+\mathrm{Res}_{\frac{m+n\tau-v}{k}}  \} \omega(u)=0. $$
Moreover
	
\begin{align*} &\mathrm{Res}_0\{\mathrm{Res}_{1+\tau-\frac{v}{i}}\ \omega(u)\}dv=\frac{1}{\theta'(0)}\cdot\mathrm{Res}_0 \frac{\theta((d_2i-d_1)v)\theta((d_4i-d_3)v)}{\theta^{n_1+1}
(-v)\theta(iv)\theta((i-j)v)\theta((i-k)v)}dv,\\
	& \mathrm{Res}_0\{\mathrm{Res}_{1+\tau-\frac{v}{j}}\ \omega(u)\}dv=\frac{1}{\theta'(0)} \cdot\mathrm{Res}_0 \frac{\theta((d_2j-d_1)v)\theta((d_4j-d_3)v)}{\theta^{n_1+1}(-v)\theta(jv)\theta((j-i)v)
\theta((j-k)v)}dv,\\
&\mathrm{Res}_0\{\mathrm{Res}_{1+\tau-\frac{v}{k}}\ \omega(u)\}dv=\frac{1}{\theta'(0)} \cdot\mathrm{Res}_0 \frac{\theta((d_2k-d_1)v)\theta((d_4k-d_3)v)}{\theta^{n_1+1}(-v)
\theta(kv)\theta((k-i)v)\theta((k-j)v)}dv,\\
&\mathrm{Res}_0\{\mathrm{Res}_{\frac{a+b\tau-v}{i}}\ \omega(u)\}dv=\frac{(-1)^{a+b+1} }{i\cdot\theta'^2(0)}e^{2\pi i b^2 \tau}\cdot \frac{\theta(\frac{d_1(a+b\tau)}{i})\theta(\frac{d_3(a+b\tau)}{i})}{\theta^{n_1+1}
(\frac{a+b\tau}{i})\theta(\frac{j(a+b\tau)}{i})\theta(\frac{k(a+b\tau)}{i})},\\
&\mathrm{Res}_0\{\mathrm{Res}_{\frac{c+d\tau-v}{j}}\ \omega(u)\}dv=\frac{(-1)^{c+d+1} }{j\cdot\theta'^2(0)}e^{2\pi i d^2 \tau}\cdot \frac{\theta(\frac{d_1(c+d\tau)}{j})\theta(\frac{d_3(c+d\tau)}{j})}{\theta^{n_1+1}
(\frac{c+d\tau}{j})\theta(\frac{i(c+d\tau)}{j})\theta(\frac{k(c+d\tau)}{j})},\\
&\mathrm{Res}_0\{\mathrm{Res}_{\frac{m+n\tau-v}{k}}\ \omega(u)\}dv=\frac{(-1)^{m+n+1} }{k\cdot\theta'^2(0)}e^{2\pi i n^2 \tau}\cdot \frac{\theta(\frac{d_1(m+n\tau)}{k})\theta(\frac{d_3(m+n\tau)}{k})}{\theta^{n_1+1}
(\frac{m+n\tau}{k})\theta(\frac{i(m+n\tau)}{k})\theta(\frac{j(m+n\tau)}{k})}.
\end{align*}
	
Apply residue theorem again on $v$ to the followings
	$$\mathrm{Res}_0\{ \mathrm{Res}_{1+\tau-\frac{v}{i}}\ \omega(u)\}dv, \ \mathrm{Res}_0\{ \mathrm{Res}_{1+\tau-\frac{v}{j}}\ \omega(u) \}dv, \ \mathrm{Res}_0\{ \mathrm{Res}_{1+\tau-\frac{v}{k}}\ \omega(u) \}dv. $$
	
	First let us consider $\mathrm{Res}_0\{ \mathrm{Res}_{1+\tau-\frac{v}{i}}\ \omega(u)\}dv$. Denote $$\omega(v)=\frac{\theta((d_2i-d_1)v)\theta((d_4i-d_3)v)}{\theta^{n_1+1}(-v)\theta(iv)\theta((i-j)v)\theta((i-k)v)}dv.$$ Since $i, (i-j), i-k$ are coprime, it is easy to check $\omega(v)$ can also be defined on $T^2\cong \mathbb{C}/L$. The possible poles of $\omega(v)$ include $0, \frac{a+b\tau}{i}, \ \frac{a+b\tau}{i-j}, \ \frac{a+b\tau}{i-k}$. We have
	
	$$\mathrm{Res}_{\frac{a+b\tau}{i}}\ \omega(v)=\frac{(-1)^{a+b} }{i\cdot \theta' (0)}e^{2\pi i b^2 \tau}\cdot \frac{\theta(\frac{(d_2i-d_1)(a+b\tau)}{i})\theta(\frac{(d_4i-d_3)(a+b\tau)}{i})}{\theta^{n_1+1}(-\frac{a+b\tau}{i})\theta(\frac{(i-j)(a+b\tau)}{i})\theta(\frac{(i-k)(a+b\tau)}{i})}.$$
	Since
	$\theta(z+b\tau)=(-1)^b e^{-2\pi i b z-\pi i b^2 \tau}\theta(z)$,
	
	\begin{align*}
		&\frac{\mathrm{Res}_{\frac{a+b\tau}{i}}\ \omega(v)}{\mathrm{Res}_0\{\mathrm{Res}_{\frac{a+b\tau-v}{i}}\ \omega(u)\}dv}\\=&  (-1)^{n_1+1+(d_2+d_4)(a+b)}\cdot e^{-2\pi i (-d_2d_1-d_4d_3+j+k)b(\frac{a+b\tau}{i})-\pi i b^2(d_2^2+d_4^2-2) \tau} \\
		=& 1.
	\end{align*}
	
	Similarly, we have that
	$$\mathrm{Res}_{\frac{c+d\tau}{j}}\{\mathrm{Res}_{1+\tau-\frac{v}{j}}\ \omega(u)\}dv=\mathrm{Res}_0\{\mathrm{Res}_{\frac{c+d\tau-v}{j}}\ \omega(u)\}dv,$$
	
	$$\mathrm{Res}_{\frac{m+n\tau}{k}}\ \{\mathrm{Res}_{1+\tau-\frac{v}{k}}\ \omega(u)\}dv=\mathrm{Res}_0\{\mathrm{Res}_{\frac{m+n\tau-v}{k}}\ \omega(u)\}dv$$
	so
	\begin{align*}
		&\frac{\mathrm{Res}_{\frac{a+b\tau}{i-j}}\ \omega(v)}{\mathrm{Res}_{\frac{a+b\tau}{i-j}}\{\mathrm{Res}_{1+\tau-\frac{v}{j}}\ \omega(u)\}dv}\\=&  (-1)^{1+(d_2+d_4)(a+b)}\cdot e^{-2\pi i (d_2(d_2j-d_1)+d_4(d_4j-d_3)-2j+k)b\frac{a+b\tau}{i-j}-\pi i b^2(d_2^2+d_4^2-2) \tau} \\
		=& -1.
	\end{align*}
Then	
	$$\mathrm{Res}_{\frac{a+b\tau}{i-k}}\ \omega(v)=-\mathrm{Res}_{\frac{a+b\tau}{i-k}}\{\mathrm{Res}_{1+\tau-\frac{v}{k}}\ \omega(u)\}dv$$
	and
	$$\mathrm{Res}_{\frac{a+b\tau}{j-k}}\{\mathrm{Res}_{1+\tau-\frac{v}{j}}\ \omega(u)\}dv=-\mathrm{Res}_{\frac{a+b\tau}{j-k}}\{\mathrm{Res}_{1+\tau-\frac{v}{k}}\ \omega(u)\}dv.$$
	
	\vskip.2cm
	With all the calculations together, we have $\mathrm{Res}_0\{ \mathrm{Res}_0\ \omega(u)\}dv=0$, i.e.,
	$$\varphi_W(H^{\bf I}_{n_1,\ 3}(d_1, d_2; d_3,d_4))=0.$$
\end{proof}


\section{Statements and Declarations}
No conflict of interest exits in the submission of this manuscript, and manuscript is approved by all authors for publication. I would like to declare on behalf of my co-authors that the work described was original research that has not been published previously, and not under consideration for publication elsewhere, in whole or in part. All the authors listed have approved the manuscript that is enclosed.

\section{Data availability statement}
Data sharing is not applicable to this article as no new data were created or analyzed in this study.

\end{document}